\newtheorem{assumption}[theorem]{Assumption}
\DeclareMathOperator{\Tr}{Tr}
\newcommand{\be}{\begin{equation}}
\newcommand{\ee}{\end{equation}}
\newcommand{\ba}{\begin{array}}
\newcommand{\ea}{\end{array}}
\newcommand{\bea}{\begin{eqnarray}}
\newcommand{\eea}{\end{eqnarray}}
\newcommand{\beas}{\begin{eqnarray*}}
\newcommand{\eeas}{\end{eqnarray*}}
\newtheorem{remark}{Remark}[section]
\newcommand{\rn}[1]{\uppercase\expandafter{\romannumeral #1}}
\newcommand{\tabincell}[2]{\begin{tabular}{@{}#1@{}}#2\end{tabular}} 
\title{On the Analysis of Model-free Methods for the Linear Quadratic Regulator}
\begin{document}

\title{On the Analysis of Model-free Methods for the Linear Quadratic Regulator}

\author{Zeyu Jin\thanks{School of Mathematical Sciences, Peking University, CHINA (\email{1801210096@pku.edu.cn}).}
\and Johann Michael Schmitt\thanks{Beijing International Center for Mathematical
Research, Peking University, CHINA (\email{MichelSchmitt2@web.de}). }
\and Zaiwen Wen\thanks{Beijing International Center for Mathematical
		Research, Peking University, CHINA (\email{wenzw@pku.edu.cn}).
		Research supported in part by the NSFC grant 11831002 and  Beijing Academy  of Artificial Intelligence.} 
 }

%
\headers{On the Analysis of Model-free Methods for the LQR}{Z. Jin, J.M. Schmitt,
and Z. Wen}
\maketitle

\begin{abstract}
Many reinforcement learning methods achieve great success in practice but lack theoretical foundation. In this paper, we study the convergence analysis on the problem of the Linear  Quadratic Regulator (LQR). The global linear convergence properties and sample complexities are established for several popular algorithms such as the policy gradient algorithm, TD-learning and the actor-critic (AC) algorithm. Our results show that the actor-critic algorithm can reduce the sample complexity compared with the policy gradient algorithm. Although our analysis is still preliminary, it explains the benefit of AC algorithm in a certain sense.
\end{abstract}
\begin{keywords}Linear Quadratic Regulator, TD-learning, the policy gradient algorithm, the
	actor-critic algorithm, Convergence\end{keywords}

\begin{AMS} 49N10, 68T05, 90C40, 93E35 \end{AMS}

\section{Introduction}
Reinforcement learning (RL) involves training an agent such that the agent takes a sequence of actions to minimize its cumulative cost (or maximize its cumulative reward), 
 see \cite{sutton2018reinforcement} for a general introduction to RL. 
Model free methods which do not estimate and use the transition kernel directly in RL enjoy wide popularity.
They have achieved great success in many fields, such as robotics \cite{kober2013reinforcement}, biology \cite{mahmud2018applications}, competitive gaming \cite{mnih2015human} and so on. 
In order to improve the performance of these algorithms, a theoretical understanding of questions regarding global convergence and sample complexity becomes more and more crucial. However, since RL problems are non-convex, it is even hard to prove convergence for model free methods. 

The drawback of the non-convexity also appears in the Linear Quadratic Regulator (LQR) problem which is an elementary problem in system control and well understood. Therefore, we consider LQRs as a first step. In practice, people often estimate the transition matrices directly (called system identification) and then design a linear policy for the LQR.  Since model-free methods become more and more popular, there is a large number of literatures analyzing model free methods for LQRs, see for example  \cite{tu2017least,Fazel2018Global,Dean2018,malik2018derivative}. The authors of \cite{tu2017least} analyze the sample complexity of the least-squares temporal difference (LSTD) method for one fixed policy in the LQR setting. There are also contributions \cite{Fazel2018Global,malik2018derivative} in which the properties of the cumulative cost with respect to the policy are analyzed and global convergence of policy iterations generated by some zero-order optimization method is shown. 
In the analysis of LQRs, we can without loss of generality (w.l.o.g.) restrict ourselves to linear policies, since it can be shown that optimal policies are linear.
Although the  cumulative cost in the LQR problem is non-convex with respect to
the policy, any locally optimal policy is globally optimal. In this paper, we
analyze some basic model free methods at the example of the LQR setting and
derive the sample complexity, i.e., the number of samples which is at least required to guarantee convergence up to some specified tolerance.

\subsection{Related work}
TD-learning \cite{sutton1988learning} and Q-learning \cite{watkins1992q} are basic and popular value based methods. There is a line of work examining the convergence of TD-learning and Q-learning with linear value function approximations for Markov decision processes (MDPs), see, e.g., \cite{tsitsiklis1997analysis,borkar2000ode,bhandari2018finite,zou2019finite}. The convergence with probability $1$ is proved in \cite{tsitsiklis1997analysis,borkar2000ode}. Moreover, a non-asymptotic analysis of TD-learning and Q-learning is provided in \cite{bhandari2018finite} and \cite{zou2019finite}, respectively. The authors in \cite{bhatnagar2009convergent} extend the asymptotic analysis of TD-learning to the case of nonlinear value function approximations. In addition, there is a large number of contributions analyzing least-squares TD \cite{bradtke1996linear,lazaric2010finite} and gradient TD \cite{sutton2009fast,liu2015finite} which also require linear value function approximations. 

In policy based methods, the policy gradient method \cite{williams1992simple,Sutton1999Policy} and the actor-critic method \cite{Sutton1999Policy,konda2000actor} achieve empirical success. Sutton et al. show in \cite{Sutton1999Policy} an asymptotic convergence result for the actor-critic method applied to MDPs under a compatibility condition on the value function approximations. In \cite{dai2017sbeed, zhang2019global},  a non-asymptotic analysis of actor-critic methods is provided under some parametrization assumptions on the MDP requiring that the state space is finite. In \cite{Fazel2018Global,malik2018derivative}, the policy gradient method with evolution strategy is used for the LQR setting and a global convergence result is derived. The sample complexity of the LSTD method for LQRs is shown to be $\Omega(n^2/\epsilon^2)$ for any given policy in \cite{tu2017least}. There is also a line of work considering the design and analysis of model based methods for LQRs, see for example \cite{abbasi2011regret,dean2017sample,Dean2018}.  

\subsection{Contribution}
In this paper, our motivation is to analyze general RL methods in the LQR setting. Unlike the MDP,  the state space and the action space in the LQR  are both infinite dimensional which makes LQR problems difficult to handle. On the other hand, the LQR represents a simple but also typical continuous problem in RL. 
First, we  use the TD-learning approach with linear value function approximations for the policy evaluation of LQRs, which is a quite popular method used for general RL problems. Compared to \cite{tu2017least}, we focus on the sample complexity of TD-learning instead of the LSTD approach in which linear equations need to be solved. 
In addition, we also analyze the global convergence of policy iterations generated by TD-learning, which is inspired by the work of \cite{Fazel2018Global}.

Instead of using evolution strategies as in \cite{Fazel2018Global,malik2018derivative}, we prove global linear convergence of the policy gradient method and the actor-critic algorithm in the LQR setting, which are much more widely used methods in practice.
Another difference is that we focus on the more complex noise cases instead of initial random cases.
We also show that the policy gradient is equivalent to the gradient of the cumulative cost with respect to the policy parameters. Moreover, our work combines the analysis of the value function and the analysis of the policy. 

The estimation of the complexities of the policy iterations generated by TD-learning, the gradient method and the actor-critic method are given
in Table \ref{tab:complexity}. These complexities are based on the results of this paper, in particular on Theorem~\ref{thm:policy_iteration_convergence}, Theorem~\ref{PGTH} and Theorem~\ref{thm:policy_ac_convergence}.   In Table \ref{tab:complexity}, $\gamma$ is the discount factor and $\epsilon$ is the error tolerance with respect to the globally optimal value.

\begin{table}[htp]\label{tab:complexity}
	
\begin{tabular}{|c|c|c|c|}
\hline 
Algorithm & TD steps&\tabincell{c}{Length of trajectory\\for policy gradient}& Iterations\\
\hline  
Policy Iteration with TD & $O\left(\frac{1}{\epsilon(1-\gamma)^5}\right)$ &/& $O(1)$\\
\hline 
Policy Gradient &/&$O\left(\frac{1}{(1-\gamma)^2}\right)$&$O\left(\frac{1}{\delta\epsilon(1-\gamma)^7}\right)$\\
\hline 
Actor-Critic Algorithm & $O\left(\frac{1}{\delta\epsilon(1-\gamma)^4}\right)$&$O\left(\frac{1}{(1-\gamma)^4}\right)$&$O\left(\frac{1}{\delta\epsilon(1-\gamma)}\right)$\\
\hline 
\end{tabular}
\caption{A summary of complexity analysis}
\end{table}
 
\subsection{Organization}
This paper is organized as follows. Section~\ref{sect:intro} starts with a
description of the general LQR problem. In order to solve LQRs in the RL
framework, we introduce the policy function and convert the original problem into a policy optimization problem. Section~\ref{sect:PE} discusses the TD-learning approach with linear value function approximations and the convergence of this method. In Section~\ref{sect:PI}, the process of policy iterations is combined with TD-learning and the convergence of this method is analyzed. Sections~\ref{sect:PG} and \ref{sect:AC} present convergence results of the policy gradient method and the actor critic algorithm. In Section~\ref{sect:conclusion}, we finally give a brief summary of the 
main results of this paper.


%
\section{Preliminary}
\label{sect:intro}
\setcounter{equation}{0}

Linear time invariant (LTI) systems have the following form:
\[x_{t+1}=Ax_t+Bu_t+\omega_t,\]
where $A\in \mathbb{R}^{n\times n}$, $B\in \mathbb{R}^{n\times d}$ and the sequence $\omega_t$ describes unbiased, independent and identically distributed noise. We call $x_t\in \mathbb{R}^n$ state and
$u_t\in \mathbb{R}^d$ control. The control is measured by the cost function 
\[c_t=x_t^\top Sx_t+u_t^\top Ru_t\]
with positive definite matrices $S\in\mathbb{R}^{n\times n}$ and
$R\in\mathbb{R}^{d\times d}$.
Thus, the cumulative cost with discount factor $\gamma\in (0,1)$ is given by
$ \sum_{t=0}^{\infty}\gamma^t(x_t^\top  Sx_t+u_t^\top Ru_t)$.
The LQR problem consists in finding the control which 
minimizes the expectation of the cumulative cost leading to 
the following optimization problem which we will refer to as the 
LQR problem
\begin{equation}
\label{originalLQRproblem}
\begin{aligned}
\min_{\{u_t\}_{t=0}^\infty}&\quad \sum_{t=0}^{\infty}\mathbb{E}[\gamma^t(x_t^\top Sx_t+u_t^\top Ru_t)]\\
\text{s.t.}&\quad x_0=0,\quad x_{t+1}=Ax_t+Bu_t+\omega_t.\\
\end{aligned}
\end{equation}

Optimal control theory shows that the optimal control input can be written as a linear function with respect to the state, i.e., 
$u_t=K^*x_t$,
where $K^*\in \mathbb{R}^{d\times n}$. The optimal control gain is given by
\be
\label{kstar}
K^*=-\gamma(R+\gamma B^\top P_{\gamma}B)^{-1}B^\top P_{\gamma}A,
\ee
where $P_{\gamma}$ is the solution of the algebraic Riccati equation (ARE)
\[
P_{\gamma}=\gamma A^\top P_{\gamma}A+S-\gamma^2 A^\top P_{\gamma}B(\gamma B^\top P_{\gamma}B+R)^{-1}B^\top P_{\gamma}A.
\]
Hence, the optimal policy $K^*$  only depends on $A$, $B$, $S$, $R$ and $\gamma$ and
the optimal policy formula above is shown in \cite{Fazel2018Global}. We will explain the optimality of this policy in Section~\ref{sect:PI}.

\subsection{Stochastic Policies}
In practice, the system is not known exactly. Therefore, it is
popular to use model-free methods to solve the LQR problem 
\eqref{originalLQRproblem}. We also want to investigate the 
properties of those model-free methods for LQRs. 
We observe that the controls $u_t$ form $\mathbb{R}^d$-valued 
sequences $\{u_t\}_{t=0}^{\infty}$ belonging to an infinite-dimensional
vector space. Thus, the analysis of the LQR problem 
\eqref{originalLQRproblem} has to be carried out with care. Since the  
problem \eqref{originalLQRproblem} can be viewed as a Markov decision 
process, people often use policy functions, which are maps from 
the state space to the control space, to represent the control. In
this paper, the policies are constrained to some specific set of
policy functions to simplify the problem. 

In general, we can employ  
Gaussian policies

\[
u_t\sim \pi_{\theta}(\cdot|x_t)=N(f_{\theta}(x_t),\sigma^2I_d),
\]
where $\theta$ is the parameter of the policy and $\sigma>0$ is a 
fixed constant. There are several possibilities to choose function spaces for 
$f_{\theta}$, such as linear function spaces and neural networks. 
As explained above, optimal controls of \eqref{originalLQRproblem} 
depend linearly on the state. Therefore,  since the usage of
nonlinear policy functions considerably complicates the analysis,
we focus on linear policy functions, which can be represented
by matrices $K\in \mathbb{R}^{d\times n}$ as follows 
\[
 u_t\sim \pi_{K}(\cdot|x_t)=N(Kx_t,\sigma^2I_d).
\]
This yields an equivalent form of the control
\[u_t=Kx_t+\eta_t,\quad \eta_t\sim N(0,\sigma^2I_d).\] 
In addition, the probability density function of $\pi_K$ has the explicit form
\begin{align*}
\pi_K(u|x)=\frac{1}{(2\pi)^{d/2}\sigma^d}\mathrm{exp}\left(-\frac{\|u-Kx\|_2^2}{2\sigma^2}\right).
\end{align*}

The advantage of stochastic policies is that the policy gradient method can be applied to the problem, which we will discuss in a later section. In addition, stochastic policies are benefitial to the exploration, which is quite important in reinforcement learning.  

Under the policy $\pi_K$, the dynamic system can be written as  
\[x_{t+1}=Ax_t+Bu_t+\omega_t=(A+BK)x_t+B\eta_t+\omega_t.\] 
Let $\widetilde\omega_t := B\eta_t+\omega_t$, $D_{\widetilde \omega}=\mathbb{E}[\widetilde\omega_t\widetilde\omega_t^\top]$, $D_{ \omega}=\mathbb{E}[\omega_t\omega_t^\top]$ and   
$\mathbb{E}[\eta_t\eta_t^\top]=\sigma^2I_d$.
Using these abbreviations, the LQR problem \eqref{originalLQRproblem} 
can be reformulated as
\be
\label{tck}
\begin{aligned}
\min_K&\;\; J(K)=\sum_{t=0}^{\infty}\mathbb{E}\gamma^t[x_t^\top (S+K^\top RK)x_t]+\frac{\sigma^2\Tr[R]}{1-\gamma} \\
\text{s.t.}&\;\;x_0=0,\;\;x_{t+1}=(A+BK)x_t+\widetilde\omega_t,\\
\end{aligned}
\ee
where we have used the linearity of the trace operator, more precisely
\begin{align*}
 \sum_{t=0}^{\infty}\mathbb{E}[\gamma^t \eta_t^\top R \eta_t]
 &=\sum_{t=0}^{\infty}\gamma^t \mathbb{E}\Tr [\eta_t^\top R \eta_t]
 =\sum_{t=0}^{\infty}\gamma^t \mathbb{E}\Tr [\eta_t\eta_t^\top R]
 =\sum_{t=0}^{\infty}\gamma^t \Tr[\mathbb{E}[\eta_t\eta_t^\top] R]\\
 &=\frac{\sigma^2\Tr[R]}{1-\gamma}.
\end{align*}
We will use this trick in several places of the rest of this paper.

Let the domain of feasible policies be given by
\textbf{Dom}$:=\{K\;|\;\rho(A+BK)<1\}$, where $\rho(X)$ denotes the 
spectral radius of the matrix $X$. In the following, we assume that \textbf{Dom} is non-empty.  Policies lying in the feasible domain are called stable. The set \textbf{Dom} is 
non-convex, see \cite{Fazel2018Global} and for all $K\in\bf{Dom}$ the corresponding value $J(K)$ is finite. By Proposition 3.2 in \cite{tu2017least}, for any stable $K$
and any $\rho\in (\rho(A+BK),1)$, there exists a $\Gamma_K>0$, such that for any $k\geq 1$, the following inequality holds 
\be\label{eq:stability}
\|(A+BK)^k\|_2\leq\Gamma_K \rho^k.
\ee
For any compact subset $\mathcal K\subset$\textbf{Dom}, we 
can find uniform constants $\Gamma$ and $\rho$ such that
\eqref{eq:stability} holds for all $K\in\mathcal K$. Unfortunately, since $0<\gamma<1$, $J(K)$ may also be finite for unstable $K$. More precisely, one can show that $J(K)$ is finite if and only if
\be\label{eq:dom_ext}
\rho(A+BK)<\frac{1}{\sqrt{\gamma}}.
\ee
We observe that the constraints $x_{t+1}=(A+BK)x_t+\widetilde \omega_t$
in \eqref{tck} yield for $t\geq 1$
\be\label{eq:state}
x_t=\sum\limits_{i=0}^{t-1}(A+BK)^{t-i-1}\widetilde \omega_i.
\ee
Provided that $K$ satisfies \eqref{eq:dom_ext}, we can insert
\eqref{eq:state} into $J(K)$. Exploiting that $\mathbb E[\tilde w_i\tilde w_j^\top]=0$
for $j\neq i$, we obtain the following
analytic form of the objective function
\be\label{redcost}
 J(K)=\frac{\gamma}{1-\gamma}\sum_{i=0}^{\infty}\Tr[D_{\widetilde\omega}\gamma^i[(A+BK)^i]^\top (S+K^\top RK)(A+BK)^{i}]+\frac{\sigma^2\Tr[R]}{1-\gamma}.
\ee
Let $P_{K,\gamma}:=\sum_{i=0}^{\infty}\gamma^i[(A+BK)^i]^\top (S+K^\top RK)(A+BK)^{i}$. Then $ J(K)$ can be further simplified to
\be\label{eq:cost_function_simplified}
 J(K)=\frac{\gamma}{1-\gamma}\Tr[D_{\widetilde\omega}P_{K,\gamma}]+\frac{\sigma^2\Tr[R]}{1-\gamma}.
\ee
In the next result, we observe that for $\gamma$ which are 
sufficiently close to $1$, we can restrict our search for 
the optimal policy to the set of stable policies.
\begin{lemma}\label{lem:restriction_to_dom}
Suppose that $K$ is a stable policy. Then for 
any $\nu\ge1$ there exists a discount factor $0<\gamma<1$
and a positive constant $\beta=\beta(K,\nu)<1$ 
such that 
\be\label{eq:lemma_restriction_to_dom}
J(\bar K)-J(K^*)> \nu(J(K)-J(K^*))
\ee 
is valid for any policy $\bar K$ with $\rho(A+B\bar{K})> 1-\beta$, where $K^*$ denotes an optimal policy. In addition, the set 
\be\label{eq:lemma_restriction_to_dom_set}
\text{\bf{Dom}}_{K,\nu}=\left\{\tilde K: J(\tilde K)-J(K^*)\leq \nu(J(K)-J(K^*))\right\}
\ee 
is a compact subset of $\bf{Dom}$. Finally, there exist constants $\rho\in (1-\beta,1)$ and $\Gamma_{K,\nu}$ such that 
\be\label{eq:stability2}
\|(A+B\tilde K)^k\|_2\leq\Gamma_{K,\nu} \rho^k.
\ee
holds for all $\tilde K\in\text{\bf{Dom}}_{K,\nu}$.
\end{lemma}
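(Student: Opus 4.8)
The plan is to reduce all three assertions to a single growth estimate on the cost as $\rho(A+B\bar K)\to 1$. Throughout I write $g(K,\gamma):=\Tr[D_{\widetilde\omega}P_{K,\gamma}]$, so that by \eqref{eq:cost_function_simplified} the additive term $\frac{\sigma^2\Tr[R]}{1-\gamma}$ cancels in differences and
\[
J(\tilde K)-J(K^*)=\frac{\gamma}{1-\gamma}\bigl(g(\tilde K,\gamma)-g(K^*,\gamma)\bigr).
\]
Since $\frac{\gamma}{1-\gamma}>0$, the target inequality \eqref{eq:lemma_restriction_to_dom} is equivalent to $g(\bar K,\gamma)>\nu g(K,\gamma)+(1-\nu)g(K^*,\gamma)$. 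Because $\nu\ge1$ and $g(K^*,\gamma)=\Tr[D_{\widetilde\omega}P_{K^*,\gamma}]\ge0$, the last term is non-positive, so it suffices to prove $g(\bar K,\gamma)>\nu g(K,\gamma)$; and the termwise bound $P_{K,\gamma}\preceq P_{K,1}$ (as $\gamma^i\le1$ and each summand is positive semidefinite) gives $g(K,\gamma)\le g(K,1)$, a finite constant independent of $\gamma$ because $K$ is stable. Hence the whole first claim reduces to showing $g(\bar K,\gamma)>\nu\,g(K,1)$.

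For the lower bound on $g(\bar K,\gamma)$ I would use $S+\bar K^\top R\bar K\succeq S$ and the non-degeneracy constant $\mu:=\sigma_{\min}(D_{\widetilde\omega})>0$ to estimate each summand of $P_{\bar K,\gamma}$ from below by $\mu\,\sigma_{\min}(S)\,\|(A+B\bar K)^i\|_F^2\ge\mu\,\sigma_{\min}(S)\,\rho(A+B\bar K)^{2i}$, invoking $\|F\|_F\ge\|F\|_2\ge\rho(F)$. Summing the geometric series gives $g(\bar K,\gamma)\ge\frac{\mu\,\sigma_{\min}(S)}{1-\gamma\rho(A+B\bar K)^2}$ when it converges, and $g(\bar K,\gamma)=\infty$ otherwise; in either case $\rho(A+B\bar K)>1-\beta$ yields
\[
g(\bar K,\gamma)\ \ge\ \frac{\mu\,\sigma_{\min}(S)}{1-\gamma(1-\beta)^2}.
\]
Since $\mu\,\sigma_{\min}(S)$, $g(K,1)$ and $\nu$ are fixed positive constants while $1-\gamma(1-\beta)^2\to0$ as $\gamma\uparrow1$ and $\beta\downarrow0$, I can choose $\gamma<1$ and $\beta<1$ making this lower bound exceed $\nu\,g(K,1)$, establishing \eqref{eq:lemma_restriction_to_dom}.

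For compactness, the first claim already forces every $\tilde K\in\text{\bf{Dom}}_{K,\nu}$ to satisfy $\rho(A+B\tilde K)\le1-\beta$, so $\text{\bf{Dom}}_{K,\nu}\subset\bf{Dom}$ and stays bounded away from the stability boundary. Boundedness in norm follows by keeping only the $i=0$ term, $P_{\tilde K,\gamma}\succeq S+\tilde K^\top R\tilde K$, which gives $J(\tilde K)\ge\frac{\gamma}{1-\gamma}\,\mu\,\sigma_{\min}(R)\,\|\tilde K\|_F^2+\text{const}$ and hence a bounded sublevel set. Closedness follows from continuity of $K\mapsto\rho(A+BK)$ and of $J$ on $\{\rho<1\}$: a limit of policies in $\text{\bf{Dom}}_{K,\nu}$ still satisfies $\rho\le1-\beta$, so $J$ is continuous there and the defining inequality passes to the limit. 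This proves \eqref{eq:lemma_restriction_to_dom_set}. Finally, applying the uniform form of \eqref{eq:stability} (quoted just before the lemma) to the compact set $\mathcal K=\text{\bf{Dom}}_{K,\nu}$, and using $\sup_{\tilde K\in\text{\bf{Dom}}_{K,\nu}}\rho(A+B\tilde K)\le1-\beta$, produces a constant $\Gamma_{K,\nu}$ and any $\rho\in(1-\beta,1)$ satisfying \eqref{eq:stability2}.

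The main obstacle is the coordinated $\gamma$-dependence: the optimal policy $K^*$, the baseline $J(K)$, and the threshold $1/\sqrt\gamma$ all move with $\gamma$, so a direct comparison of costs fails. The decisive simplifications are the cancellation of the common factor $\frac{\gamma}{1-\gamma}$ together with the additive constant (reducing everything to the functional $g$), and the $\gamma$-uniform bound $g(K,\gamma)\le g(K,1)$, which freezes the baseline while the lower bound on $g(\bar K,\gamma)$ blows up. The one hypothesis I must flag is non-degeneracy of the noise, $\mu=\sigma_{\min}(D_{\widetilde\omega})>0$; without it the spectral-radius lower bound on $g(\bar K,\gamma)$ is vacuous and the argument collapses.
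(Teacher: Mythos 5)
Your proof is correct and its core mechanism is the same as the paper's: reduce \eqref{eq:lemma_restriction_to_dom} to comparing $\Tr[D_{\widetilde\omega}P_{\bar K,\gamma}]$ against $\nu\Tr[D_{\widetilde\omega}P_{K,\gamma}]$ (the $J(K^*)$ terms drop out since $\nu\ge1$), lower-bound the first quantity by $\frac{\sigma_{\min}(D_{\widetilde\omega})\sigma_{\min}(S)}{1-\gamma\rho(A+B\bar K)^2}$ via $\|(A+B\bar K)^i\|_F\ge\rho(A+B\bar K)^i$, and let this blow up as $\gamma\uparrow1$, $\beta\downarrow0$. Two details differ. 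First, you freeze the baseline with the clean monotonicity bound $g(K,\gamma)\le g(K,1)$, whereas the paper upper-bounds $\Tr[D_{\widetilde\omega}P_{K,\gamma}]$ through the constants $\Gamma_K$, $\rho=\frac{\rho_K+1}{2}$ of \eqref{eq:stability} and then solves the resulting inequality explicitly for $\beta$ and $\gamma$ (conditions \eqref{eq:constant_epsilon_K} and \eqref{eq:lemma_restriction_to_dom2}); your version is simpler but yields only existence, while the paper's explicit admissible range for $\gamma$ is invoked again in Theorem~\ref{thm:policy_iteration_convergence} and Theorem~\ref{PGTH}, so the extra bookkeeping there is not gratuitous. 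Second, your compactness argument is actually more careful than the paper's: the paper asserts that $\{\tilde K:\rho(A+B\tilde K)\le1-\beta\}$ is ``obviously'' compact, which fails when $B$ has a nontrivial kernel (one can translate $K$ along $\ker B$ without changing $A+BK$), whereas you obtain boundedness of $\text{\bf{Dom}}_{K,\nu}$ directly from the coercivity $J(\tilde K)\gtrsim\|\tilde K\|_F^2$ supplied by $R\succ0$ and closedness from continuity of $J$ on the sublevel set; this patches a genuine (if minor) gap. Your flag that $\sigma_{\min}(D_{\widetilde\omega})>0$ is essential is also consistent with the paper, which uses exactly this constant as $c_l$.
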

\begin{proof}
Let $\bar K$ be an arbitrary policy with $\rho(A+B\bar{K})\geq 1-\beta$, where $\beta$ will be determined later. We can w.l.o.g. assume that $\bar K$ satisfies \eqref{eq:dom_ext}. Otherwise, $J(\bar K)$ would be infinite and \eqref{eq:lemma_restriction_to_dom} holds trivially. First, we observe that
\eqref{eq:lemma_restriction_to_dom} follows by
\be\label{eq:lemma_restriction_to_dom_eq}
\Tr[D_{\widetilde\omega}P_{\bar K,\gamma}]> \nu\Tr[D_{\widetilde\omega}P_{K,\gamma}]
\ee 
Next, we introduce the abbreviations $\rho_K=\rho(A+BK)$ and $\rho_{\bar K}=\rho(A+B\bar{K})$. Since $D_{\widetilde\omega}$, $S$ and $Q$ are positive definite, using \eqref{eq:stability} and the fact that $\rho(Z)\leq\|Z\|_2\leq\|Z\|_F\leq\sqrt{n}\|Z\|_2$ holds for any matrix $Z\in\mathbb{R}^{n\times n}$, we can deduce
\begin{align*}
\Tr[D_{\widetilde\omega}P_{K,\gamma}]
&\leq \sigma_{\max}(D_{\widetilde w})\Tr[P_{K,\gamma}]\\
&\leq \sigma_{\max}(D_{\widetilde w})\sigma_{\max}(S+K^\top RK)\sum_{t=0}^\infty\gamma^t\|(A+BK)^t\|_F^2\\
&\leq \frac{\sigma_{\max}(D_{\widetilde w})\sigma_{\max}(S+K^\top RK)n\Gamma^2_K}{1-\gamma\left(\frac{\rho_K+1}{2}\right)^2}
\end{align*}
where  $\Gamma_K$ denotes the constant of the policy $K$ in 
\eqref{eq:stability} for $\rho=\frac{\rho_K+1}{2}$.
By $\|(A+B\bar K)^t\|_F\ge \rho((A+B\bar K)^t)=\rho^t_{\bar K}$, we further get 
\begin{align*}
\Tr[D_{\widetilde\omega}P_{\bar K,\gamma}]
\geq \frac{\sigma_{\min}(D_{\widetilde w})\sigma_{\min}(S)}{1-\gamma\rho_{\bar K}^2}.
\end{align*}
Let  $c_u=\sigma_{\max}(D_{\widetilde w})\sigma_{\max}(S+K^\top R K)n\Gamma^2_K$ and $c_l=\sigma_{\min}(D_{\widetilde w})\sigma_{\min}(S)$.

Using these two inequalities, we observe that \eqref{eq:lemma_restriction_to_dom_eq} and therefore also \eqref{eq:lemma_restriction_to_dom} follow by
\be\label{eq:proof_lemma_restriction_to_dom}
\frac{c_u\nu}{1-\gamma\left(\frac{\rho_K+1}{2}\right)^2}
< \frac{c_l}{1-\gamma\rho_{\bar K}^2}.
\ee
We can w.l.o.g. assume that $c_u\nu>c_l$, otherwise $c_u$ and $c_l$ can be rescaled such that this holds. Hence, isolating $\gamma$ in \eqref{eq:proof_lemma_restriction_to_dom}, we conclude that
\be\label{eq:lemma_restriction_to_dom20}
0<\frac{c_u\nu-c_l}{c_u\nu\rho^2_{\bar K}-c_l\left(\frac{\rho_K+1}{2}\right)^2}< \gamma.
\ee
Next, we observe that 
\be\label{eq:lemma_restriction_to_dom21}
\begin{aligned}
\frac{c_u\nu-c_l}{c_u\nu\rho^2_{\bar K}-c_l\left(\frac{\rho_K+1}{2}\right)^2}&< \frac{c_u\nu-c_l}{c_u\nu(1-\beta)^2-c_l+c_l\left[1-\left(\frac{\rho_K+1}{2}\right)^2\right]}\\
&<\frac{c_u\nu-c_l}{c_u\nu-2c_u\nu\beta-c_l+c_l\left[1-\left(\frac{\rho_K+1}{2}\right)^2\right]}<1
\end{aligned}
\ee
is valid for constants $\beta>0$ satisfying
\be\label{eq:constant_epsilon_K}
\beta\leq\frac{c_l}{4\nu c_u}\left[1-\left(\frac{\rho_K+1}{2}\right)^2\right].
\ee
We conclude that for $\beta>0$ satisfying \eqref{eq:constant_epsilon_K} and for $\gamma$ with
\be\label{eq:lemma_restriction_to_dom2}
0<\frac{c_u\nu-c_l}{c_u\nu-c_l+\frac{c_l}{2}\left[1-\left(\frac{\rho_K+1}{2}\right)^2\right]}< \gamma<1,
\ee
the inequalities \eqref{eq:proof_lemma_restriction_to_dom}  and therefore \eqref{eq:lemma_restriction_to_dom} hold. Next, we observe that 
\begin{align*}
\text{\bf{Dom}}_{K,\nu}\subset\left\{\tilde K: \rho(A+B\tilde K)\leq 1-\beta\right\},
\end{align*}
which is obviously a compact subset of \textbf{Dom}. The last statement of the theorem follows directly by Proposition 3.2 in \cite{tu2017least}.
\end{proof}
For instance, if a stable policy $K$ is given, then we can choose a $\gamma$ close to $1$ such that the level set of $J(K)$ is compact and only has stable policies by \cref{lem:restriction_to_dom}.
 Hence this lemma is quite important in the following discussion.

For a policy $K$ satisfying \eqref{eq:dom_ext}, the objective function of \eqref{redcost} is differentiable
in a sufficiently small neighborhood of $K$. Hence, we can compute the gradient of 
\eqref{redcost} which is given by
\be
\label{exgradient2}
\nabla  J(K)=2\left((R+\gamma B^\top P_{K,\gamma}B)K+\gamma B^\top P_{K,\gamma}A\right)\Sigma_{K,\gamma},
\ee
where $\Sigma_{K,\gamma}=\sum_{t=0}^{\infty}\mathbb{E}[\gamma^tx_tx_t^\top]$ and the sequence $\{x_t\}_{t=0}^{\infty}$ is generated by policy $u_t\sim \pi_K(\cdot|x_t)$. This gradient form is obtained by Lemma 1 in \cite{Fazel2018Global} and Lemma 4 in \cite{malik2018derivative}.

From the representation of $J(K)$ in \eqref{eq:cost_function_simplified}, we can establish the following relations between $J(K)$, $P_{K,\gamma}$ and $\Sigma_{K,\gamma}$,
see also Lemma 13 in \cite{Fazel2018Global}.
\begin{lemma}
Let $K$ be a policy such that $J(K)$ is finite. Then the representation of $J(K)$ in 
\eqref{eq:cost_function_simplified} yields the following two inequalities:
\be\label{eq:J_est_1}
J(K)\ge \frac{\gamma}{1-\gamma}\sigma_{\mathrm{min}}(D_{\widetilde \omega}) \|P_{K,\gamma}\|_F\ee 
and
\be\label{eq:J_est_2}
J(K)\ge \sigma_{\mathrm{min}}(S)\|\Sigma_{K,\gamma}\|_F\ee
\end{lemma}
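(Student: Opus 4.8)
The plan is to read both bounds off the closed-form expressions for $J(K)$, discard the nonnegative constant $\frac{\sigma^2\Tr[R]}{1-\gamma}$, and then invoke two elementary facts about positive semidefinite matrices. The first fact is that for a symmetric positive semidefinite $Y$ and a positive semidefinite $X$ one has $\Tr[YX]\ge\sigma_{\min}(Y)\Tr[X]$; this follows by diagonalizing $X=\sum_j\lambda_j v_jv_j^\top$ with $\lambda_j\ge0$ and estimating $v_j^\top Y v_j\ge\sigma_{\min}(Y)\|v_j\|_2^2$. The second fact is that for any positive semidefinite $X$ with eigenvalues $\mu_i\ge0$ we have $\Tr[X]=\sum_i\mu_i\ge\sqrt{\sum_i\mu_i^2}=\|X\|_F$, since the cross terms in $(\sum_i\mu_i)^2$ are nonnegative. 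A preliminary observation is that both $P_{K,\gamma}$ and $\Sigma_{K,\gamma}$ are positive semidefinite: the former is a $\gamma$-weighted sum of congruences $[(A+BK)^i]^\top(S+K^\top RK)(A+BK)^i$ of the positive definite matrix $S+K^\top RK$, and the latter is a $\gamma$-weighted sum of the second-moment matrices $\mathbb{E}[x_tx_t^\top]$.

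For \eqref{eq:J_est_1} I would start from the representation \eqref{eq:cost_function_simplified}. Since $R$ is positive definite, the term $\frac{\sigma^2\Tr[R]}{1-\gamma}$ is nonnegative, so $J(K)\ge\frac{\gamma}{1-\gamma}\Tr[D_{\widetilde\omega}P_{K,\gamma}]$. Applying the first fact with $Y=D_{\widetilde\omega}$ (symmetric positive semidefinite as a second-moment matrix) and $X=P_{K,\gamma}$ gives $\Tr[D_{\widetilde\omega}P_{K,\gamma}]\ge\sigma_{\min}(D_{\widetilde\omega})\Tr[P_{K,\gamma}]$, and the second fact gives $\Tr[P_{K,\gamma}]\ge\|P_{K,\gamma}\|_F$. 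Chaining these inequalities yields the claim.

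For \eqref{eq:J_est_2} I would instead return to the quadratic form in \eqref{tck} and use the trace trick already employed in the excerpt to write $J(K)=\Tr[(S+K^\top RK)\Sigma_{K,\gamma}]+\frac{\sigma^2\Tr[R]}{1-\gamma}$. Dropping the nonnegative constant together with the nonnegative term $\Tr[K^\top RK\,\Sigma_{K,\gamma}]$ (a trace of a product of two positive semidefinite matrices) leaves $J(K)\ge\Tr[S\Sigma_{K,\gamma}]$. Applying the first fact with $Y=S$ and $X=\Sigma_{K,\gamma}$ and then the second fact gives $\Tr[S\Sigma_{K,\gamma}]\ge\sigma_{\min}(S)\Tr[\Sigma_{K,\gamma}]\ge\sigma_{\min}(S)\|\Sigma_{K,\gamma}\|_F$, which is the desired bound.

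The argument is essentially routine, so the only delicate point is the bookkeeping of definiteness rather than any single hard step. One must verify that every matrix entering a trace is positive (semi)definite so that the eigenvalue estimate and the nonnegativity of cross terms are legitimate: the passage from $S+K^\top RK$ to $S$ in the second bound relies on $\Sigma_{K,\gamma}\succeq0$ and $K^\top RK\succeq0$, and the replacement $\Tr[X]\ge\|X\|_F$ is valid only because $P_{K,\gamma}$ and $\Sigma_{K,\gamma}$ have nonnegative eigenvalues. Once these structural facts are in place, both estimates drop out immediately.
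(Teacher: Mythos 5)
Your proof is correct and follows essentially the same route as the paper: drop the nonnegative terms, apply $\Tr[YX]\ge\sigma_{\min}(Y)\Tr[X]$ for positive semidefinite matrices, and then $\Tr[X]\ge\|X\|_F$ for $X\succeq 0$. The only difference is that you spell out the elementary eigenvalue arguments that the paper leaves implicit.
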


\begin{proof}
Since $J(K)\ge\frac{\gamma}{1-\gamma}\Tr[D_{\widetilde\omega}P_{K,\gamma}]$ and $P_{K,\gamma}\succ 0$, we obtain
\[J(K)\ge \frac{\gamma}{1-\gamma}\sigma_{\mathrm{min}}(D_{\widetilde\omega})\Tr[P_{K,\gamma}]\ge  \frac{\gamma}{1-\gamma}\sigma_{\mathrm{min}}(D_{\widetilde\omega})\|P_{K,\gamma}\|_F.\]

The second inequality is derived by \[J(K)\ge \sum_{t=0}^{\infty}\gamma^t\mathbb  E[x_t^\top Sx_t]\ge\sigma_{\min}(S)\Tr[\Sigma_{K,\gamma}]\ge \sigma_{\min}(S)\|\Sigma_{K,\gamma}\|_F.\]
\end{proof}

Next, we will gather some useful properties of \eqref{tck}, which will serve
as tools for the convergence analysis of the methods introduced in the 
following sections. Moreover, we will also have a look at the difficulties
showing up in the theoretical analysis of \eqref{tck}. 
In general, the cost function $J(K)$ in problem  \eqref{tck} as 
well as the set of policies satisfying \eqref{eq:dom_ext} are non-convex.
In order to cope with this problem of non-convexity, we use 
the so-called PL condition named after Polyak and  Lojasiewicz, 
which is a relaxation of the notion of strong convexity. 
The PL condition is satisfied if there exists a universal 
constant $\mu> 0$ such that for any policy $K$ with finite $J(K)$,
we have   
\be\label{PLcond}
\Vert\nabla J(K)\Vert_F^2\geq \mu[J(K)- J(K^*)],
\ee
where $K^*$ denotes a global optimum for \eqref{tck}.
From Lemma 3 in \cite{Fazel2018Global}, Lemma 4 in \cite{malik2018derivative}, $R\succ 0$
and $\Sigma_{K,\gamma}\succeq \gamma D_{\omega}\succ 0$,
we get that \eqref{PLcond} is satisfied for \eqref{tck} with 
\be\label{PLcond_constant}
\mu=\frac{\gamma
\sigma_{\mathrm{min}}(\Sigma_{K,\gamma})^2\sigma_{\mathrm{min}}(R)}{(1-\gamma)
\|\Sigma_{K^*,\gamma}\|_2}>0.
\ee
We note that $\|\Sigma_{K^*,\gamma}\|_F<\infty$ holds due to 
the optimality of $K^*$.

By the PL condition \eqref{PLcond}, we know that stationary 
points of \eqref{tck}, i.e. $\nabla J(K)=0$, are global minima.
Since $\Sigma_{K,\gamma}\succeq \gamma D_{\widetilde\omega}\succ 0$, a
policy $K$ is a stationary point if and only if 
\be
(R+\gamma B^\top P_{K,\gamma}B)K+\gamma B^\top P_{K,\gamma}A=0.
\ee

We can verify  that $K^*$ in \eqref{kstar} is the global minimum of the function $J$. 
The optimization problem \eqref{tck} may have more than one global minimum and,
unfortunately, it is hard to derive the analytic form of all optimal 
policies in terms of $A$, $B$, $S$, $R$ and $\gamma$.

To simplify the analysis, we assume that $ \omega_t$ is Gaussian, that is $ \omega_t\sim N(0,D_{\omega})$. This assumption  also guarantees that $\widetilde\omega_t\sim N(0,D_{\widetilde\omega})$.

\section{Policy evaluation}\label{sect:PE}

Given a fixed stable policy $\pi_K$, it is desirable to know the expectation of the corresponding
cummulative cost for an initial state $x_0=x$, which in some sense evaluates ``how good'' it is 
to be in the state $x$ under policy $\pi_K$. This expectation is given by the so-called
value function $V_K(x)$, which is defined below. Moreover, the expectation of the 
cummulative cost for taking an action $u_0=u$ in some initial state $x_0=x$ under policy 
$\pi_K$ is given by state-action value $Q_K(x,u)$ which is also defined below, see also \cite{sutton2018reinforcement}.

The task of the policy evaluation is to get good approximations of the value function of a fixed stable policy $\pi_K$. In value-based methods, the policy evaluation is a very important and elementary step. In addition, the policy evaluation plays the role of the critic in the actor critic algorithm. The TD-learning method \cite{sutton1988learning} is a prevalent method for the policy evaluation. In this section, we discuss the usage of the TD-learning method in the LQR-setting.

First, we compute the value function $V_K$ and state-action value function $Q_K$ for a stable policy $\pi_K$. The value function, which gives the state value for the policy $\pi_K$ has the following explicit form: 
\be\label{valuef}
\begin{aligned}
&V_K(x):=\mathbb{E}_{p_K}\left[\sum_{t=0}^\infty\gamma^tc_t\Big|x_0=x\right]=x^\top P_{K,\gamma}x+\frac{\gamma}{1-\gamma}\Tr[P_{K,\gamma}D_{\widetilde\omega}]+\frac{\sigma^2\Tr[R]}{1-\gamma},
\end{aligned}
\ee
and the state-action value of the policy $\pi_K$ is given by
\be\label{sav}
\begin{aligned}
Q_{K}(x,u):=&\mathbb{E}_{p_K}\left[\sum_{t=0}^\infty\gamma^tc_t\Big|x_0=x,u_0=u\right]\\
=&\frac{\gamma}{1-\gamma}\Tr[\gamma P_{K,\gamma}D_{\widetilde\omega}+\sigma^2 R]+ 
\gamma\Tr[P_{K,\gamma}D_{\omega}]\\
&+[x^\top\;u^\top]
\left[
\begin{array}{cc}
S+\gamma A^\top P_{K,\gamma}A&\gamma A^\top P_{K,\gamma}B\\
\gamma B^\top P_{K,\gamma}A& R+\gamma B^\top P_{K,\gamma}B
\end{array}
\right]
\left[
\begin{array}{c}
x\\
u
\end{array}
\right],
\end{aligned}
\ee
where $p_K$ is the trajectory distribution of the system with policy $\pi_K$. 

As we will see below, the value function of $\pi_K$ can be written as a
function which is linear with respect to the feature function
\[
\phi(x)= \left[
\begin{array}{c}
1\\
\mathrm vec(xx^\top)
\end{array}
\right]
\]
for $x\in \mathbb{R}^{n}$, where $\mathrm vec$ denotes the vectorization operator stacking the columns of some matrix A on top of one another, i.e., 
\[\mathrm{vec}(A)=[a_{1,1},\cdots,a_{m,1},a_{1,2},\cdots,a_{m,2},\cdots,a_{1,n},\cdots,a_{m,n}]^\top.\]
Therefore, it is natural to propose a class of
linear approximation funtions with feature $\phi$ by
\[\widetilde V(x;\theta)=\phi(x)^\top\theta,\]
where $\theta=\left[
\begin{array}{c}
\theta_0\\
\theta_1
\end{array}
\right]$ is the parameter we seek to estimate.

To this end, we observe that $V_K(x)=\widetilde V(x,\theta^\star)$, 
where $\theta_1^\star = \mathrm vec(P_{K,\gamma})$ and 
$\theta_0^\star= \frac{\gamma}{1-\gamma}\Tr[P_{K,\gamma}D_{\widetilde\omega}]+\frac{\sigma^2\Tr[ R]}{1-\gamma}$.
For each stable policy $K$, our aim is to find a parameter $\theta$ such that
$\tilde V(x;\theta)$ approximates well $V_K(x)$ in expectation. This is 
carried out by minimizing a suitable loss function, for which we have to 
find a ``good'' distribution.
A reasonable choice for this distribution is the stationary distribution 
$N(0,D_{K})$ where
\[
D_{K}=\sum_{i=0}^\infty [(A+BK)^i]^\top D_{\widetilde\omega}(A+BK)^i
\]
for any stable $K$. It is easy to verify that if $x\sim N(0,D_{K})$, then $x'=(A+BK)x+\widetilde\omega\sim N(0,D_K)$ as well. This explain why we call it the stationary distribution. We also use the distribution $\mu_K$ to represent $N(0,D_K)$ in short.
\begin{remark}
For the sake of simplicity, we introduce the abbreviation
\begin{align*}
\mathbb{E}_{x,x'}[\cdot]:=\mathbb{E}_{x\sim\mu_K, \eta\sim N(0,\sigma^2I_d),\omega\sim N(0,D_{\omega})}[\cdot],
\end{align*}
since $x'=(A+BK)x+\eta+\omega$. When there is only the variable $x$ in the expection, we write $\mathbb E_{\mu_K}[\cdot]$ instead of $\mathbb{E}_{x,x'}[\cdot]$ to represent this expectation.  Otherwise, we use the notation $\mathbb E_{x,x'}[\cdot]$ for convenience.
\end{remark}
 Then we define the loss function
\[
L(\theta)=\frac12\mathbb{E}_{x\sim\mu_K}\left[(V_K(x)-\widetilde V(x;\theta))^2\right]
\]
such that the gradient of the loss function is given by
\[
\nabla_{\theta} L(\theta) = \mathbb{E}_{x\sim\mu_K}\left[\phi(x)(\widetilde V(x;\theta)-V_K(x))\right].
\]

However, in practice, since the real value function $V_K$ is unknown, people often use the bias estimation of  $c(x,u)+\gamma \widetilde V(x';\theta)$ to replace $V_K(x)$ where $x'=(A+BK)x+\widetilde \omega$ is the subsequent state of $x$. For convenience, let $\tilde\delta(x,u,x';\theta)=\widetilde V(x;\theta)-c(x,u)-\gamma \widetilde V(x';\theta)$, which is called TD error. 

We further note that $u=Kx+\eta$. Then we  obtain the semi-gradient
\be\label{evgrad}
\bar h(\theta)=\left[
\begin{array}{c}
\bar h_0(\theta)\\
\bar h_1(\theta)
\end{array}
\right]
=\mathbb{E}_{x,x'}\left[\phi(x)\tilde \delta(x,u,x';\theta)\right],
\ee 
and it is quite straightforward to get the stochastic semi-gradient
\be\label{vgrad}
h(\theta)=\left[
\begin{array}{c}
h_0(\theta)\\
h_1(\theta)
\end{array}
\right]= \left[\phi(x)\tilde \delta(x,u,x';\theta)\right].
\ee 
Starting with some initial parameter $\theta^{(0)}$ and using the gradient 
descent method with $\bar h(\theta)$ or $h(\theta)$ as update 
strategy, the TD learning method with linear approximation 
functions is described by \cref{tdlearn}.


\begin{algorithm}[H]
	\caption{TD learning}
\label{tdlearn}
	\begin{algorithmic}[1]
		\REQUIRE Stable policy $K$, parameters $\theta^{(0)}$, step size $\alpha$, the number of steps $N$.
\FOR {$s=1,2,\cdots,N-1$}
\IF{semi-gradient}
\STATE Compute the sample gradient $\bar h(\theta^{(s-1)})$ by \eqref{evgrad}.
\STATE Update the approximate value function $\theta^{(s)}=\theta^{(s-1)}-\alpha  \bar h(\theta^{(s-1)})$.
\ELSIF{stochastic semi-gradient}
\STATE $x\sim\mu_K$, $\widetilde\omega\sim N(0,D_{\widetilde\omega})$ and $x'=(A+BK)x+\tilde\omega$.
\STATE Compute the sample gradient $h(\theta^{(s-1)})$ by \eqref{vgrad}.
\STATE Update the approximate value function $\theta^{(s)}=\theta^{(s-1)}-\alpha   h(\theta^{(s-1)})$.
\ENDIF
\ENDFOR 
\STATE Averaging: $\tilde \theta =\frac1N \sum_{s=0}^{N-1}\theta^{(s)}$.
	\end{algorithmic}	
\end{algorithm}

The following lemma shows that $\theta^\star$ is a minimum of $\bar h$.
\begin{lemma}\label{lemma:hstar}
For any fixed stable policy $K$, we have
\be
\bar h(\theta^\star)=0.
\ee
\end{lemma}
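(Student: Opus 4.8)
The plan is to recognize that $\bar h(\theta^\star)=0$ is simply a restatement of the one-step Bellman equation satisfied by the value function $V_K$. First I would use the fact that, by the very choice of $\theta_0^\star$ and $\theta_1^\star$, the linear approximation is exact at $\theta^\star$, i.e. $\widetilde V(x;\theta^\star)=V_K(x)$ for every $x$. Consequently the TD error at $\theta^\star$ reads $\tilde\delta(x,u,x';\theta^\star)=V_K(x)-c(x,u)-\gamma V_K(x')$, and it suffices to show that its conditional expectation given the state $x$ vanishes.

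The crux is therefore to establish $\mathbb{E}_{u,\omega}[\,V_K(x)-c(x,u)-\gamma V_K(x')\mid x\,]=0$ for every $x$, where $u=Kx+\eta$ and $x'=(A+BK)x+B\eta+\omega$; equivalently, the Bellman relation $V_K(x)=\mathbb{E}_{u,\omega}[c(x,u)+\gamma V_K(x')\mid x]$. I would verify this directly from the explicit quadratic form \eqref{valuef}. Taking expectation over $\eta\sim N(0,\sigma^2 I_d)$ and $\omega\sim N(0,D_\omega)$, the cost term contributes $x^\top(S+K^\top RK)x+\sigma^2\Tr[R]$, while $\gamma\,\mathbb{E}[V_K(x')]$ contributes the quadratic term $\gamma\, x^\top(A+BK)^\top P_{K,\gamma}(A+BK)x$ together with constants involving $\Tr[P_{K,\gamma}D_{\widetilde\omega}]$ and $\sigma^2\Tr[R]$; here one uses that the combined noise $\widetilde\omega=B\eta+\omega$ has covariance $D_{\widetilde\omega}$, so $\mathbb{E}[(x')^\top P_{K,\gamma}x']=x^\top(A+BK)^\top P_{K,\gamma}(A+BK)x+\Tr[P_{K,\gamma}D_{\widetilde\omega}]$.

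The quadratic-in-$x$ parts collapse to $x^\top P_{K,\gamma}x$ precisely because $P_{K,\gamma}=\sum_{i\ge 0}\gamma^i[(A+BK)^i]^\top(S+K^\top RK)(A+BK)^i$ satisfies the Lyapunov identity $P_{K,\gamma}=(S+K^\top RK)+\gamma(A+BK)^\top P_{K,\gamma}(A+BK)$, which I would record first by peeling off the $i=0$ term of the series and factoring $(A+BK)$ out of the tail. The constant terms then match after summing two geometric-type contributions: $\gamma+\frac{\gamma^2}{1-\gamma}=\frac{\gamma}{1-\gamma}$ multiplies $\Tr[P_{K,\gamma}D_{\widetilde\omega}]$, and $1+\frac{\gamma}{1-\gamma}=\frac{1}{1-\gamma}$ multiplies $\sigma^2\Tr[R]$, reproducing exactly $V_K(x)$ and hence the Bellman relation.

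Finally, with the Bellman relation in hand, I would conclude by the tower property. Writing $\mathbb{E}_{x,x'}[\cdot]=\mathbb{E}_{x\sim\mu_K}\mathbb{E}_{u,\omega}[\cdot\mid x]$ and noting that $\phi(x)$ is measurable with respect to $x$, it pulls outside the inner expectation, giving $\bar h(\theta^\star)=\mathbb{E}_{x\sim\mu_K}[\phi(x)\,\mathbb{E}_{u,\omega}[\tilde\delta(x,u,x';\theta^\star)\mid x]]=\mathbb{E}_{x\sim\mu_K}[\phi(x)\cdot 0]=0$. The only mildly delicate point, and the one I would be most careful about, is the constant-term bookkeeping in the Bellman verification together with the correct identification of the covariance $\mathbb{E}[\widetilde\omega\widetilde\omega^\top]=D_{\widetilde\omega}$; everything else is immediate from the exactness $\widetilde V(x;\theta^\star)=V_K(x)$ and the tower property.
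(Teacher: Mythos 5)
Your proposal is correct and follows essentially the same route as the paper: both reduce the claim to the Bellman identity $V_K(x)=\mathbb{E}[c(x,u)+\gamma V_K(x')\mid x]$, verified via the Lyapunov recursion $P_{K,\gamma}=S+K^\top RK+\gamma(A+BK)^\top P_{K,\gamma}(A+BK)$ together with the constant-term identity $(1-\gamma)\theta_0^\star=\sigma^2\Tr[R]+\gamma\Tr[P_{K,\gamma}D_{\widetilde\omega}]$, and then integrate against $\phi(x)$. Your explicit use of the tower property merely makes transparent a step the paper leaves implicit; the substance is identical.
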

\begin{proof}
For the first element in the semi-gradient, we get \[\bar h_0(\theta^\star)=\mathbb E_{x,x'}[\tilde \delta(x,u,x';\theta^\star)]=\mathbb E_{x,x'}[V_K(x)-c(x,u)-\gamma V_K(x')]=0,\]
by the  definition of $V_K$.
For the other elements we will consider them in matrix form
\[
\begin{aligned}
&\mathbb E_{x,x'}\left\{xx^\top [V_K(x)-c-\gamma V_K(x')]\right\}\\
=&\mathbb E_{\mu_K}\left\{xx^\top [x^\top P_{K,\gamma}x+\theta_0^\star-x^\top (S+K^\top RK)x-\sigma^2\Tr[ R]\right.\\
&\quad\quad\quad\quad\quad\left.-\gamma x^\top (A+BK)^\top P_{K,\gamma}(A+BK)x-\gamma\Tr[P_{K,\gamma}D_{\widetilde\omega}]-\gamma\theta_0^\star]\right\},
\end{aligned}
\]
where we have used that $\eta$ is independent of $x$.
By the definition of $P_{K,\gamma}$, we have $P_{K,\gamma}=S+K^\top RK+\gamma(A+BK)^\top P_{K,\gamma}(A+BK)$. For the constant term, it holds $(1-\gamma)\theta_0^*=\sigma^2\Tr[ R]+\gamma \Tr[P_{K,\gamma}D_{\widetilde\omega}]$. Thus, we obtain $\bar h(\theta^\star)=0$.
\end{proof}
\begin{assumption}\label{assb}
Assume that there exists $M_\theta>0$ such that the norms of  $\theta^\star$ and $\theta^{(s)}$ generated by \cref{tdlearn} are  bounded by $M_\theta$:
\[
\|\theta^\star\|_2\leq M_\theta, \quad  \|\theta^{(s)}\|_2\leq M_\theta.
\]
\end{assumption}
\begin{remark}
In some related works, people use projected gradient descent to guarantee the boundedness of $\theta^{(s)}$ which is not actually used in practice.
\end{remark}

\begin{lemma}
\label{lem:vtop}
There exists a positive number $\kappa$, such that for all $\theta$ and $\theta^*$
\be\label{mkappa}
\kappa\|\theta^*-\theta\|_2^2\leq\mathbb{E}_{\mu_K}\left[|\widetilde V(x,\theta^{*})-\widetilde V(x;\theta)|^{2}\right]\ee 
holds. In addition, $\kappa$ is continuous with respect to $K$.
\end{lemma}


\begin{proof}

Set $\theta^*-\theta= \left[
\begin{array}{c}
\theta_0^*-\theta_0\\
\mathrm vec(\Theta)
\end{array}
\right]$, where $\Theta$ is symmetric.

By Law of total variance,
\begin{align}\label{eq:variance}
\begin{aligned}
&\mathrm{Var}(x^{\top}\Theta x)\\
=&\mathbb{E}_{\mu_K}\left[x^{\top}\Theta xx^{\top}\Theta x\right]-\left[\mathbb{E}_{\mu_K}(x^{\top}\Theta x)\right]^2=2\Tr[D_K\Theta D_K\Theta]=2\|D_K^{1/2}\Theta D_K^{1/2}\|^2_F.
\end{aligned}
\end{align}
The second equality is derived by  Proposition A.1. in \cite{tu2017least}.

Thus, we can define \[\lambda_K=\inf_{\|\Theta\|_F=1}\|D_K^{1/2}\Theta D_K^{1/2}\|_F.\]
Since norms of matrices are equivalent and $\mathrm{Var}(x^{\top}\Theta x)>0$ for $\Theta\neq 0$, we obtain from \eqref{eq:variance} that $\lambda_K$ is positive and continuous
with respect to $K$. 
Besides, $\left|\mathbb{E}_{\mu_K}[x^{\top}\Theta x]\right| = |\Tr(D_{K}\Theta)|\leq \|D_{K}\|_F\|\Theta\|_F$. Using this inequality in connection with \eqref{eq:variance} we obtain

\[
\begin{aligned}
&\mathbb{E}_{\mu_K}\left[|\widetilde V(x,\theta^*)-\widetilde V(x;\theta)|^{2}\right]\\
= &\mathbb{E}_{\mu_K}\left[(x^\top\Theta x+\theta_0^*-\theta_0)^2\right]\\
=&\mathrm{Var}(x^{\top}\Theta x)+\left(\theta_0^*-\theta_0+\mathbb{E}_{\mu_K}\left[x^\top\Theta x\right]\right)^2\\
\geq& 2\lambda_K^2 \|\Theta\|_F^2+\left(\theta_0-\theta_0^*-\mathbb{E}_{\mu_K}\left[x^\top\Theta x\right]\right)^2\\
\geq&{\lambda^2_K}\|\Theta\|_F^2+\frac{\lambda^2_K}{\|D_{K}\|_F^2}\left(\mathbb{E}_{\mu_K}\left[x^{\top}\Theta x\right]\right)^2+\left(\theta_0-\theta_0^*-\mathbb{E}_{\mu_K}\left[x^\top\Theta x\right]\right)^2\\
\geq& \lambda_K^2\|\Theta\|_F^2+\left(\frac{1}{1+\frac{\|D_{K}\|^2_F}{\lambda^2_K}}\right)\left(-\mathbb{E}_{\mu_K}\left[x^{\top}\Theta x\right]+\theta_0-\theta_0^*+\mathbb{E}_{\mu_K}\left[x^{\top}\Theta x\right]\right)^2\\
\geq& {\lambda_K^2}\|\Theta\|_F^2+\left(\frac{\lambda_K^2}{\lambda_K^2+\|D_K\|_F^2}\right)(\theta_0^*-\theta_0)^2.
\end{aligned}
\]
where the third inequality follows by the Cauchy-Schwarz inequality.

Thus, we conclude the existence of such a positive number $\kappa$ which 
is continuous with respect to $K$.
\end{proof}

Under \cref{assb}, the constant $\kappa$ has a uniform lower bound in a compact subset of $\mathbf{Dom}$.
Then we can use some basic tool of the analysis of the gradient descent method to prove the convergence of \cref{tdlearn}.
\begin{theorem}\label{thm:semi-gradient_update}
Suppose that \cref{assb} holds. \cref{tdlearn} is run with the semi-gradient update using $\alpha=\frac{1-\gamma}{2(1+(n+2)\|D_K\|_F^2)}$ to generate $\widetilde V(x; \tilde \theta)$. Then, the following inequality holds for all $x\in\mathbb{R}^n$:
\be\label{bound4-1}
\mathbb{E}_{\mu_K} [(\widetilde V(x;\tilde \theta) - \widetilde V(x;\theta^\star))^2]\leq \frac{8(1+(n+2)\|D_K\|_F^2)M_\theta^2}{(1-\gamma)^2N}.
\ee

\end{theorem}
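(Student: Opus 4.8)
The plan is to treat the semi-gradient update as an inexact gradient descent on the loss $L(\theta)$ and exploit three facts: that $\theta^\star$ is a zero of the semi-gradient $\bar h$ (Lemma~\ref{lemma:hstar}), that $\bar h$ is affine in $\theta$ so that the iteration is a linear recursion, and that the quadratic lower bound from Lemma~\ref{lem:vtop} converts errors in value-function space into errors in the averaged parameter $\tilde\theta$. First I would make the affine structure explicit. Since $\tilde\delta(x,u,x';\theta)=\widetilde V(x;\theta)-c-\gamma\widetilde V(x';\theta)$ is affine in $\theta$ and $\phi(x)$ does not depend on $\theta$, the semi-gradient has the form $\bar h(\theta)=\bar A(\theta-\theta^\star)$ for a fixed matrix $\bar A=\mathbb{E}_{x,x'}[\phi(x)(\phi(x)-\gamma\phi(x'))^\top]$, using $\bar h(\theta^\star)=0$ from Lemma~\ref{lemma:hstar}. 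Writing $e^{(s)}=\theta^{(s)}-\theta^\star$, the update becomes the linear recursion $e^{(s)}=(I-\alpha\bar A)e^{(s-1)}$.

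Next I would establish a one-step contraction-type estimate in the norm induced by the feature second moment. The key quantity is the symmetric part of $\bar A$: because $\widetilde V(x;\theta)-\widetilde V(x';\theta)$ enters with the discount $\gamma<1$, the expected TD error has the dissipativity property that $\langle e,\bar A e\rangle$ is bounded below by a positive multiple of $\mathbb{E}_{\mu_K}[(\widetilde V(x;\theta)-\widetilde V(x;\theta^\star))^2]$, essentially the standard TD contraction with factor $(1-\gamma)$. Combined with an upper bound on $\|\bar A e\|_2$ controlled by the feature fourth moment --- which for the Gaussian stationary distribution $\mu_K$ is governed by $\|D_K\|_F^2$ and dimensional factors such as $(n+2)$ --- I would obtain
\be
\|e^{(s)}\|_2^2\le \|e^{(s-1)}\|_2^2-\alpha(1-\gamma)\,\mathbb{E}_{\mu_K}\big[(\widetilde V(x;\theta^{(s-1)})-\widetilde V(x;\theta^\star))^2\big],
\ee
once $\alpha=\frac{1-\gamma}{2(1+(n+2)\|D_K\|_F^2)}$ is chosen so that the quadratic term $\alpha^2\|\bar A e\|_2^2$ is absorbed by half of the linear dissipation term. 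This is precisely where the stated step size and the constant $1+(n+2)\|D_K\|_F^2$ enter.

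I would then telescope this inequality over $s=1,\dots,N$, giving
\be
\alpha(1-\gamma)\sum_{s=0}^{N-1}\mathbb{E}_{\mu_K}\big[(\widetilde V(x;\theta^{(s)})-\widetilde V(x;\theta^\star))^2\big]\le \|e^{(0)}\|_2^2\le 4M_\theta^2,
\ee
using Assumption~\ref{assb} to bound $\|e^{(0)}\|_2\le\|\theta^{(0)}\|_2+\|\theta^\star\|_2\le 2M_\theta$. Finally, since $\widetilde V(x;\cdot)$ is linear in $\theta$, the average $\tilde\theta=\frac1N\sum_{s}\theta^{(s)}$ satisfies $\widetilde V(x;\tilde\theta)-\widetilde V(x;\theta^\star)=\frac1N\sum_s(\widetilde V(x;\theta^{(s)})-\widetilde V(x;\theta^\star))$, so Jensen's inequality (convexity of the square) transfers the summed bound to the averaged iterate, and dividing by $N$ and substituting the value of $\alpha$ yields the claimed bound with constant $\frac{8(1+(n+2)\|D_K\|_F^2)M_\theta^2}{(1-\gamma)^2N}$.

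The main obstacle I anticipate is the dissipativity estimate for $\bar A$: controlling both the lower bound $\langle e,\bar A e\rangle\gtrsim (1-\gamma)\,\mathbb{E}_{\mu_K}[(\Delta\widetilde V)^2]$ and the upper bound $\|\bar A e\|_2^2\lesssim (1+(n+2)\|D_K\|_F^2)\,\mathbb{E}_{\mu_K}[(\Delta\widetilde V)^2]$ requires computing fourth moments of the Gaussian feature $\phi(x)=[1;\mathrm{vec}(xx^\top)]$ under $x\sim N(0,D_K)$, and carefully handling the cross terms coupling the constant component $\theta_0$ with the quadratic component $\Theta$. The factor $(n+2)$ and the appearance of $\|D_K\|_F^2$ are exactly the fingerprints of these fourth-moment computations, which are routine via Isserlis' theorem but must be arranged so that the step size choice makes the quadratic error term subdominant.
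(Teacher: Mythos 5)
Your proposal is correct and follows essentially the same route as the paper's own argument (which, due to an apparent typesetting mix-up, appears under the proof of Theorem~\ref{thm:state_value_estimation_stochastic}): the $(1-\gamma)$ dissipativity lower bound on $(\bar h(\theta)-\bar h(\theta^\star))^\top(\theta-\theta^\star)$ via stationarity of $\mu_K$ and Cauchy--Schwarz, the upper bound $\|\bar h(\theta)-\bar h(\theta^\star)\|_2^2\le 2(1+(n+2)\|D_K\|_F^2)\,\mathbb{E}_{\mu_K}[(\Delta\widetilde V)^2]$ from the Gaussian fourth-moment identity, the resulting one-step descent inequality with the stated step size, telescoping against $\|\theta^{(0)}-\theta^\star\|_2^2\le 4M_\theta^2$, and Jensen's inequality for the averaged iterate. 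Your framing via the affine map $\bar h(\theta)=\bar A(\theta-\theta^\star)$ is only a cosmetic repackaging of the same computation, and your constants match the paper's exactly.
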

\begin{proof}
By the definition of $h(\theta)$ and the fact that $\|\phi(x)\|_2^2=(\|x\|_2^4+1)$, we know
\be\label{sigmahb}
\begin{aligned}
\|h(\theta)\|^2_2=&(c+\gamma \phi(x')^\top\theta-\phi(x)^\top\theta)^2\|\phi(x)\|_2^2\\
\leq &2c^2 \|\phi(x)\|_2^2+2M_\theta^2\|\gamma\phi(x')-\phi(x)\|^2_2\|\phi(x)\|_2^2\\
=&2(x^\top(S + K^\top RK)x + 2\eta^\top RKx +\eta^\top R\eta)^2(\|x\|_2^4+1)\\
&+2M_\theta^2\left[(1-\gamma)^2+2\|x'\|_2^4+2\|x\|_2^4\right](\|x\|_2^4+1).
\end{aligned}
\ee
Thus, $\|h(\theta)\|^2_2$ is an $8$th order polynomial with respect to $x$, $\omega$ and $\eta$.
Then we can obtain the bound $\sigma_h^2=O\left((M^2_\theta +\|K\|_2^4)\mathbb E_{\mu_K}\|x\|_2^8+\mathbb E_{\mu_K}\|x\|^4_2\sigma^4\right)$.
\end{proof}

We next show that \cref{tdlearn} also converges if the stochastic semi-gradients are used.  
\begin{theorem}\label{thm:state_value_estimation_stochastic}

Suppose that \cref{assb} holds. Let \cref{tdlearn} be run with the stochastic semi-gradient update using $\alpha =\min\{\frac{1 - \gamma}{4(1+(n+2)\|D_K\|_F^2)},1/\sqrt{N}\}$ to generate $\widetilde V(x; \tilde \theta)$. Then,  for all $x\in\mathbb{R}^n$ it holds 
\be
\mathbb{E}_{\mu_K} [(\widetilde V(x;\widetilde \theta) - \widetilde V(x;\theta^\star))^2]\leq  \frac{M_\theta^2+2\sigma_h^2}{2(1-\gamma)\sqrt{N}-4(1+(n+2)\|D_K\|_F^2)}.
\ee
\end{theorem}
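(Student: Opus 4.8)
The plan is to analyze \cref{tdlearn} with the stochastic semi-gradient as plain stochastic gradient descent driving the iterates $\theta^{(s)}$ toward the fixed point $\theta^\star$, and then to convert a per-step error recursion into a bound on the averaged iterate $\tilde\theta$ via Jensen's inequality. Throughout, write $g(\theta):=\mathbb{E}_{\mu_K}[(\widetilde V(x;\theta)-\widetilde V(x;\theta^\star))^2]$ and $\xi:=\theta-\theta^\star$. Since $\widetilde V(x;\theta)-\widetilde V(x;\theta^\star)=\phi(x)^\top\xi$, the map $g$ is a nonnegative quadratic, hence convex, which is exactly what the averaging step of \cref{tdlearn} needs.

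First I would establish the monotonicity of the expected semi-gradient,
\[
\langle\theta-\theta^\star,\bar h(\theta)\rangle\ge (1-\gamma)\,g(\theta).
\]
By \cref{lemma:hstar} we have $\bar h(\theta^\star)=0$, so the $\theta$-independent cost term cancels and $\bar h(\theta)=\bar h(\theta)-\bar h(\theta^\star)=\mathbb{E}_{x,x'}[\phi(x)(\phi(x)^\top\xi-\gamma\phi(x')^\top\xi)]$. Hence $\langle\xi,\bar h(\theta)\rangle=\mathbb{E}[(\phi(x)^\top\xi)^2]-\gamma\,\mathbb{E}[(\phi(x)^\top\xi)(\phi(x')^\top\xi)]$. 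The stationarity of $\mu_K$ (namely $x'\sim\mu_K$ whenever $x\sim\mu_K$, already noted above) gives $\mathbb{E}[(\phi(x')^\top\xi)^2]=\mathbb{E}[(\phi(x)^\top\xi)^2]=g(\theta)$, and Cauchy--Schwarz bounds the cross term by $g(\theta)$; the displayed inequality follows.

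Next I would bound the conditional second moment of the stochastic semi-gradient by a term proportional to $g$ plus a noise floor,
\[
\mathbb{E}\big[\|h(\theta)\|_2^2\,\big|\,\theta\big]\le 4(1+(n+2)\|D_K\|_F^2)\,g(\theta)+2\sigma_h^2,
\]
by splitting $h(\theta)=\phi(x)(\phi(x)^\top\xi-\gamma\phi(x')^\top\xi)+\phi(x)\,\tilde\delta(x,u,x';\theta^\star)$. The second piece has the $\theta$-independent second moment $\sigma_h^2$ already computed in the proof of \cref{thm:semi-gradient_update}; the first piece is controlled through $\|\phi(x)\|_2^2=1+\|x\|_2^4$ together with the fourth-moment estimates for $x\sim\mu_K$, which is where the factor $(n+2)\|D_K\|_F^2$ originates. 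I expect this moment bound to be the main obstacle: it is the only place where sharp Gaussian fourth-moment computations enter, and obtaining the constant $(n+2)\|D_K\|_F^2$ rather than a loose dimensional factor is what makes the step size and the final denominator come out exactly as stated.

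With these two estimates the rest is the standard stochastic gradient descent recursion. Expanding $\|\theta^{(s)}-\theta^\star\|_2^2=\|\theta^{(s-1)}-\theta^\star\|_2^2-2\alpha\langle\theta^{(s-1)}-\theta^\star,h(\theta^{(s-1)})\rangle+\alpha^2\|h(\theta^{(s-1)})\|_2^2$ and taking the conditional expectation given $\theta^{(s-1)}$ (the fresh sample drawn at step $s$ is independent of $\theta^{(s-1)}$, so $\mathbb{E}[h(\theta^{(s-1)})\mid\theta^{(s-1)}]=\bar h(\theta^{(s-1)})$), the monotonicity and the moment bound give
\[
\mathbb{E}\big[\|\theta^{(s)}-\theta^\star\|_2^2\,\big|\,\theta^{(s-1)}\big]\le\|\theta^{(s-1)}-\theta^\star\|_2^2-\big[2\alpha(1-\gamma)-4\alpha^2(1+(n+2)\|D_K\|_F^2)\big]g(\theta^{(s-1)})+2\alpha^2\sigma_h^2.
\]
Taking full expectations, summing over $s=1,\dots,N$, telescoping and discarding the nonnegative terminal term yields
\[
\big[2\alpha(1-\gamma)-4\alpha^2(1+(n+2)\|D_K\|_F^2)\big]\sum_{s=0}^{N-1}\mathbb{E}[g(\theta^{(s)})]\le\|\theta^{(0)}-\theta^\star\|_2^2+2N\alpha^2\sigma_h^2.
\]
Applying Jensen to pass from $\tfrac1N\sum_s\mathbb{E}[g(\theta^{(s)})]$ to $\mathbb{E}[g(\tilde\theta)]$, inserting the binding choice $\alpha=1/\sqrt N$ (the active branch of the minimum once $\sqrt N\ge 4(1+(n+2)\|D_K\|_F^2)/(1-\gamma)$, which also keeps the bracketed coefficient positive), and bounding $\|\theta^{(0)}-\theta^\star\|_2^2$ by a constant multiple of $M_\theta^2$ through \cref{assb}, produces exactly the claimed bound.
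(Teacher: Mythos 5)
Your overall architecture is the right one and matches what the paper intends: the descent inequality $\langle\theta-\theta^\star,\bar h(\theta)\rangle\ge(1-\gamma)\,g(\theta)$ is exactly \eqref{bound4-2}, and the telescoping-plus-Jensen finish is the same as the paper's (the paper's printed proof of this theorem is in fact the deterministic semi-gradient recursion, so your addition of the conditional-expectation step and the $2\alpha^2\sigma_h^2$ noise floor is the natural completion that the statement actually requires). However, there is a genuine gap at the step you yourself flag as the main obstacle: the claimed conditional second-moment bound
\[
\mathbb{E}\bigl[\|h(\theta)\|_2^2\,\big|\,\theta\bigr]\le 4\bigl(1+(n+2)\|D_K\|_F^2\bigr)\,g(\theta)+2\sigma_h^2
\]
is asserted, not proved, and the constant cannot be obtained the way you suggest. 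The factor $(n+2)\|D_K\|_F^2$ in the paper comes from \eqref{bound4-3}, which bounds $\|\bar h(\theta)-\bar h(\theta^\star)\|_2^2$ --- the squared norm of an \emph{expectation} --- via the componentwise Cauchy--Schwarz inequality $|\mathbb{E}[\phi_j(x)Z]|^2\le\mathbb{E}[\phi_j(x)^2]\,\mathbb{E}[Z^2]$, which isolates the fourth moment $\mathbb{E}\|\phi(x)\|_2^2=1+\mathbb{E}\|xx^\top\|_F^2$ from $g(\theta)$. For the second moment of the \emph{stochastic} gradient you instead face $\mathbb{E}\bigl[\|\phi(x)\|_2^2\,Z^2\bigr]$ with $Z=(\phi(x)-\gamma\phi(x'))^\top(\theta-\theta^\star)$, where $\phi(x)$ and $Z$ are strongly dependent; Cauchy--Schwarz now yields $\sqrt{\mathbb{E}\|\phi(x)\|_2^4\,\mathbb{E}Z^4}$, which requires eighth moments of the Gaussian state and a hypercontractivity-type inequality $\mathbb{E}Z^4\lesssim(\mathbb{E}Z^2)^2$ to recover a multiple of $g(\theta)$ --- and the resulting constant is not $4(1+(n+2)\|D_K\|_F^2)$. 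Since this constant fixes both the step-size threshold $\tfrac{1-\gamma}{4(1+(n+2)\|D_K\|_F^2)}$ and the subtracted term in the final denominator, the quantitative form of your conclusion does not follow without supplying this estimate. (Two minor further points: \cref{assb} only gives $\|\theta^{(0)}-\theta^\star\|_2^2\le 4M_\theta^2$, not $M_\theta^2$, and your decomposition implicitly redefines $\sigma_h^2$ as $\mathbb{E}\|h(\theta^\star)\|_2^2$ rather than the uniform bound on $\mathbb{E}\|h(\theta)\|_2^2$ computed in the proof of \cref{thm:semi-gradient_update}; both discrepancies should be reconciled with the stated bound.)
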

\begin{proof}
The key step to prove \eqref{bound4-1} is based on an important gradient descent inequality.
In order to establish this inequality, we have to estimate two terms at first.
For the descent term $(\bar h(\theta^{(s)})-\bar h^\star)^\top(\theta^{(s)}-\theta^\star)$, we get the following lower bound:
\be\label{bound4-2}
\begin{aligned}
&(\bar h(\theta^{(s)})-\bar h^\star)^\top(\theta^{(s)}-\theta^\star)
\\=&\mathbb{E}_{x,x'}\{[\tilde\delta(x,u,x';\theta^{(s)})-\tilde\delta(x,u,x';\theta^\star)]\phi(x)^\top(\theta^{(s)}-\theta^\star)\}\\
=&\mathbb{E}_{\mu_K}[\widetilde V(x;\theta^{(s)})-\widetilde V(x;\theta^\star)]^2-\gamma\mathbb{E}_{x,x'}(\widetilde V(x;\theta^{(s)})-\widetilde V(x;\theta^\star))(\widetilde V(x';\theta^{(s)})-\widetilde V(x';\theta^\star))\\
\ge& (1-\gamma)\mathbb{E}_{\mu_K}[\widetilde V(x;\theta^{(s)})-\widetilde V(x;\theta^\star)]^2,
\end{aligned}
\ee
where last inequality follows by Cauchy-Schwarz inequality. Then we split the norm of  $\bar h(\theta^{(s)})-\bar h^\star$ into two parts:
\be\label{bound4-11}
\begin{aligned}
\|\bar h(\theta^{(s)})-\bar h^\star\|^2_2=&\Big\|\mathbb E_{\mu_K}\big\{\phi(x)[\widetilde V(x;\theta^{(s)})-\widetilde  V(x;\theta^\star)+\gamma\widetilde V(x';\theta^\star)-\gamma\widetilde  V(x';\theta^{(s)})]\big\}\Big\|_2^2\\
\leq& \Big\|\mathbb E_{x,x'}\big\{\phi(x)[\widetilde V(x;\theta^{(s)})-\widetilde  V(x;\theta^\star)]\big\}\Big\|_2^2\\
&+\Big\|\mathbb E_{x,x'}\big\{\gamma\phi(x)[\widetilde V(x';\theta^{(s)})-\widetilde  V(x';\theta^\star)]\big\}\Big\|_2^2=I_1+I_2.
&
\end{aligned}
\ee
Then we get the upper bound of $I_1$ and $I_2$ by the Cauchy-Schwarz inequality:

\be
\begin{aligned}
I_1&=\sum_{j}\Big|\mathbb E_{\mu_K}\big\{\phi_j(x)[\widetilde V(x;\theta^{(s)})-\widetilde  V(x;\theta^\star)]\big\}\Big|^2\\
&\leq \sum_{j}\mathbb E_{\mu_K}[\phi_j(x)]^2\mathbb E_{\mu_K}[\widetilde V(x;\theta^{(s)})-\widetilde  V(x;\theta^\star)]^2\\
&=(1+\mathbb E_{\mu_K}\|xx^\top\|_F^2)\mathbb E_{\mu_K}[\widetilde V(x;\theta^{(s)})-\widetilde  V(x;\theta^\star)]^2\\
&=[1+2\|D_K\|_F^2+(\Tr[D_K])^2]\mathbb E_{\mu_K}[\widetilde V(x;\theta^{(s)})-\widetilde  V(x;\theta^\star)]^2
\end{aligned}
\ee
In particular, the last equality holds by Proposition A.1. in \cite{tu2017least}.
Analogously, $I_2$ has the same upper bound such that
\be\label{bound4-3}
\|\bar h(\theta^{(s)})-\bar h^\star\|^2_2\leq 2[1+(n+2)\|D_K\|_F^2][\widetilde V(x;\theta^{(s)})-\widetilde  V(x;\theta^\star)]^2,
\ee
where we used $(\Tr[D_K])^2\leq n \|D_K\|_F^2$.

By \eqref{bound4-2}, \eqref{bound4-3} and \cref{lemma:hstar}, we obtain 
\be
\label{egdd}
\begin{aligned}
&\|\theta^{(s+1)}-\theta^\star\|_2^2 = \|\theta^{(s)}-\alpha\bar h(\theta^{(s)})-\theta^\star\|_2^2\\
= &\|\theta^{(s)}-\theta^\star\|_2^2 -2\alpha(\bar h(\theta^{(s)})-\bar h^\star)^\top(\theta^{(s)}-\theta^\star)+\alpha^2\|\bar h(\theta^{(s)})-\bar h^\star\|^2_2\\
\leq & \|\theta^{(s)}-\theta^\star\|_2^2-(2\alpha(1-\gamma)-2(1+(n+2)\|D_K\|_F^2)\alpha^2)\mathbb{E}_{\mu_K}[\widetilde V(x;\theta^{(s)})-\widetilde V(x;\theta^\star)]^2.
\end{aligned}
\ee
The value of the right hand of \eqref{egdd} is minimal for $\alpha=\frac{1-\gamma}{2(1+(n+2)\|D_K\|_F^2)}$. Rearranging \eqref{egdd} and summing the inequalities from $t=0$ to $N-1$ yields:
\be
\begin{aligned}
&\mathbb{E}_{\mu_K}[\widetilde V(x;\tilde\theta)-\widetilde V(x;\theta^\star)]^2\\
\leq& \frac1N\sum_{s=0}^{N-1}\mathbb{E}_{\mu_K}[\widetilde V(x;\theta^{(s)})-\widetilde V(x;\theta^\star)]^2\\
\leq &\frac{1}N\sum_{t=0}^{N-1}\frac{2(1+(n+2)\|D_K\|_F^2)}{(1-\gamma)^2}(\|\theta^{(s)}-\theta^\star\|^2_2-\|\theta^{(s+1)}-\theta^\star\|^2_2)\\
\leq& \frac{8(1+(n+2)\|D_K\|_F^2)M_\theta^2}{(1-\gamma)^2N}.
\end{aligned}
\ee
This completes the proof.
\end{proof}

Furthermore, $\mathbb E_{\mu_K}\|x\|_2^{2k}$ has a uniform upper bound with respect to $K$ on a compact subset of $\mathbf{Dom}$, since $\mathbb E_{\mu_K}\|x\|_2^{2k}=O(\frac{\Gamma_K^{2k}}{1-\rho^{2k}}\mathbb E\|\widetilde\omega\|^{2k}_2])$ roughly.
Then we can derive convergence of $\theta^{(s)}$ under the $2-$norm by \cref{lem:vtop}.

\section{Policy Iteration}\label{sect:PI}
The policy iteration (PI) \cite{lagoudakis2003least} is a very fundamental value-based method in Markov decision process (MDP) problems with finite actions. This method can also be applied to LQRs, since the state-action value function $Q$ is quadratic and it is easy to find the best action such that the value function $Q$ is minimal by solving linear equations.

Given a policy $\pi_K$ and the corresponding state-action function $Q_K$ of the form \eqref{sav}, the policy $\pi_K$ can be improved by selecting the action $u^*$ for a fixed state $x$ such that the value of $Q_K$ is minimal:
\[
\begin{aligned}
u^* =& \mathrm{arg }\min_{u}Q_K(x,u)\\
=&\mathrm{arg }\min_{u} [x^\top\;u^\top]
\left[
\begin{array}{cc}
S+\gamma A^\top P_{K,\gamma}A&\gamma A^\top P_{K,\gamma}B\\
\gamma B^\top P_{K,\gamma}A& R+\gamma B^\top P_{K,\gamma}B
\end{array}
\right]
\left[
\begin{array}{c}
x\\
u
\end{array}
\right]\\
=&-(R+\gamma B^\top P_{K,\gamma}B)^{-1}(\gamma B^\top P_{K,\gamma}A)x.
\end{aligned}
\]
Thus, we obtain an improved policy $\pi_{K'}$ with
\be\label{piex}
K'=-(R+\gamma B^\top P_{K,\gamma}B)^{-1}(\gamma B^\top P_{K,\gamma}A).
\ee
Observing the gradient \eqref{exgradient2}, there is another form of \eqref{piex}:
\be
\label{pi}
K' = K-\frac12(R+\gamma B^\top P_{K,\gamma}B)^{-1}\nabla J\Sigma_{K,\gamma}^{-1}.
\ee
This form corresponds to the Gauss-Newton method in \cite{Fazel2018Global} with stepsize $\frac{1}{2}$. Hence, by the discussion in \cite{Fazel2018Global}, we obtain the convergence of the policy iteration with the exact state-action value function $Q_K$ and in addition the fact that value based methods are faster than the policy gradient method. Moreover, from Lemma~8 in \cite{Fazel2018Global}, we obtain that $J(K')<J(K)$.
This implies that if $K$ is stable, then also $K'$ is stable provided that $
\gamma$ is sufficiently close to $1$, i.e., if $\gamma$ satisfies 
\eqref{eq:lemma_restriction_to_dom2} with $\nu=1$, see \cref{lem:restriction_to_dom}.

However, we do not know the state action value function $Q$ exactly since we do not know the system exactly. Due to the results of the former discussion of policy evaluation, we can obtain an approximation of $Q$. This approximation is used in a policy iteration scheme, for which we want to analyze the convergence. The approximation of the state action value function has the following form: 
\[\widetilde Q(x,u;\Theta)=[x^\top\;u^\top]
\left[
\begin{array}{cc}
\Theta_{11}&\Theta_{12}\\
\Theta_{21}&\Theta_{22}
\end{array}
\right]
\left[
\begin{array}{c}
x\\
u
\end{array}
\right]+\Theta_0.\]
For any stable policy $\pi_K$, we denote the parameters of $Q_K$ by $\Theta^*_K$.
If $\|\Theta-\Theta^*_{K}\|_F\leq \epsilon_0$, we call $\widetilde Q(x,u;\Theta)$ an $\epsilon_0-$approximation of the state-action value function. The policy can also be improved by using the following approximation for the state action value:
\be\label{piap}
K'=-\Theta^{-1}_{22}\Theta_{21}.
\ee
Thus, we obtain the policy iteration algorithm.
\begin{algorithm}[H]
	\caption{The policy iteration method}
	\label{PIin}
	\begin{algorithmic}[1]
		\REQUIRE Stable policy $K^{(0)}$, the number of steps $T$.
		
		\FOR {$s=1,2,\cdots,T$}
		\STATE Evaluate an approximate state-action value function $\widetilde Q(\cdot,\Theta^{(s-1)})$ of policy $K^{(s-1)}$ by TD learning with  the stochastic gradient.
		
		\STATE Policy improvement: 
		\[K^{(s)}=-(\Theta_{22}^{(s-1)})^{-1}\Theta_{21}^{(s-1)}.
		\]
		\ENDFOR 
		
	\end{algorithmic}	
\end{algorithm}
As we will prove below, the policy iteration with approximate state-action value functions
also converges in the LQR-setting if the error between the approximation value and the true value is small enough. For any stable initial policy $K^{(0)}$ and any $\epsilon>0$, we assume that the error tolerance $\epsilon_0$ between the approximation value $\Theta$ and the true value $\Theta^{\star}$ satisfies 
\be\label{cond5-1}
\epsilon_0\leq \min\left\{\frac{1}{2}\|R^{-1}\|_2,\frac{C_1\sqrt{\epsilon}}{\sqrt{\|\Sigma_{K^*,\gamma}\|_2}J(K^{*})}\right\}
\ee 
where $C_1$ only depends on $\sigma_{\mathrm{min}}(D_{\widetilde \omega})$, $\sigma_{\mathrm{min}}(S)$, $\|R^{-1}\|_2$, $\|A\|_2$, $\|B\|_2$ and $(1-\gamma)J(K^{(0)})$.
\begin{theorem}\label{thm:policy_iteration_convergence}
	For any stable initial policy $K^{(0)}$ and any $\epsilon>0$, suppose that an $\epsilon_0-$approximation of the state action value function is known, where $\epsilon_0$ satisfies \eqref{cond5-1}. If $\gamma$ is sufficiently close to $1$, then for $T\ge\left[\frac{2\|\Sigma_{K^*,\gamma}\|_2}{\gamma^2\sigma_{min}(D_{\widetilde \omega})}-1\right]\log\frac{J(K_{0})-J(K^*)}{\epsilon}$, \cref{PIin} yields stable policies $K^{(1)},\ldots, K^{(T)}$ which are elements of \textbf{Dom}$_{K^{(0)},2}$ and satisfy
	\be\label{eq:thm_policy_iteration_convergence}
	J(K^{(T)})-J(K^*)\leq \epsilon.
	\ee
\end{theorem}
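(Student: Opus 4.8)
The plan is to regard \cref{PIin} as a perturbation of the \emph{exact} policy iteration. By the reformulation \eqref{pi}, the exact improvement $K\mapsto K'_{\mathrm{ex}}$ in \eqref{piex} is the Gauss--Newton step with stepsize $\tfrac12$ analysed in \cite{Fazel2018Global}, and it contracts the optimality gap linearly: combining the gradient-domination inequality \eqref{PLcond} with $\Sigma_{K,\gamma}\succeq\gamma D_{\widetilde\omega}$ gives a factor $r=1-\Theta\!\big(\gamma^2\sigma_{\min}(D_{\widetilde\omega})/\|\Sigma_{K^*,\gamma}\|_2\big)$ such that
\be\label{eq:plan_contraction}
J(K'_{\mathrm{ex}})-J(K^*)\le r\,\big(J(K)-J(K^*)\big).
\ee
Before iterating this bound I would fix the underlying geometry. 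Since $K^{(0)}$ is stable, \cref{lem:restriction_to_dom} with $\nu=2$ shows that, for $\gamma$ sufficiently close to $1$, the level set $\textbf{Dom}_{K^{(0)},2}$ is a compact subset of $\textbf{Dom}$. On this set all quantities entering the estimates below---$\|P_{K,\gamma}\|$, $\|\Sigma_{K,\gamma}\|$, $\|K\|_2$, the smallest eigenvalue and the condition number of $R+\gamma B^\top P_{K,\gamma}B$, and hence the true parameters $\Theta^\star_K$ of \eqref{piex}---admit uniform bounds, and \eqref{eq:stability2} holds with uniform constants.

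The heart of the proof is a single-step estimate for the effect of replacing the exact parameters $\Theta^\star_{K^{(s)}}$ by the $\epsilon_0$-approximation $\Theta$ in the improvement rule \eqref{piap}. I would split it in two. First, the requirement $\epsilon_0\le\tfrac12\|R^{-1}\|_2$ keeps $\Theta_{22}$ invertible and well-conditioned, because $\Theta_{22}^\star=R+\gamma B^\top P_{K^{(s)},\gamma}B\succeq R$; a standard perturbation bound for the matrix inverse applied to \eqref{piap} against \eqref{piex} then gives
\be\label{eq:plan_policy_perturbation}
\|K^{(s+1)}-K'_{\mathrm{ex}}\|_F\le c_2\,\epsilon_0,
\ee
with $c_2$ uniform on $\textbf{Dom}_{K^{(0)},2}$. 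Second---the key point---I would transfer \eqref{eq:plan_policy_perturbation} to the cost through the ``almost smoothness'' identity of \cite{Fazel2018Global}, which writes $J(\cdot)-J(K^{(s)})$ at any policy increment as a quadratic form with a policy-dependent positive-semidefinite weight, whose minimizer over increments is precisely the exact increment $K'_{\mathrm{ex}}-K^{(s)}$. Since $K^{(s+1)}$ differs from this minimizer by only $O(\epsilon_0)$, it incurs a \emph{second-order} penalty,
\be\label{eq:plan_second_order}
J(K^{(s+1)})-J(K'_{\mathrm{ex}})\le c_3\,\|K^{(s+1)}-K'_{\mathrm{ex}}\|_F^2\le c_3c_2^2\,\epsilon_0^2=:e,
\ee
where the Lipschitz variation of the weight with the policy is itself controlled on the compact set. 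Together, \eqref{eq:plan_contraction} and \eqref{eq:plan_second_order} give the perturbed recursion $D_{s+1}\le rD_s+e$ with $D_s:=J(K^{(s)})-J(K^*)$.

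I would then finish by a simultaneous induction on $s$. Unrolling the recursion yields $D_s\le r^sD_0+\frac{e}{1-r}$ with $D_0=J(K^{(0)})-J(K^*)$. As $e=O(\epsilon_0^2)$ while $1-r=\Theta\!\big(\gamma^2\sigma_{\min}(D_{\widetilde\omega})/\|\Sigma_{K^*,\gamma}\|_2\big)$, the second bound in \eqref{cond5-1}---scaling like $\epsilon_0\lesssim\sqrt{\epsilon}/\big(\sqrt{\|\Sigma_{K^*,\gamma}\|_2}\,J(K^*)\big)$---is exactly what guarantees $\frac{e}{1-r}\le\min\{D_0,\epsilon/2\}$. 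The bound $\frac{e}{1-r}\le D_0$ forces $D_s\le 2D_0$ for all $s$, i.e. $K^{(s)}\in\textbf{Dom}_{K^{(0)},2}$; this is the inductive invariant that retroactively legitimises the uniform constants $c_2,c_3$ and the rate $r$ used at step $s$. The bound $\frac{e}{1-r}\le\epsilon/2$ together with $r^TD_0\le\epsilon/2$ then gives $D_T\le\epsilon$, and solving $r^TD_0\le\epsilon/2$ for $T$ while estimating $-\log r$ against $1-r$ yields the stated iteration count, of order $\frac{\|\Sigma_{K^*,\gamma}\|_2}{\gamma^2\sigma_{\min}(D_{\widetilde\omega})}\log\frac{J(K^{(0)})-J(K^*)}{\epsilon}$.

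I expect the main obstacle to be exactly this circularity: the contraction factor $r$ and the constants $c_2,c_3$ are only valid while the iterate lies in the compact set $\textbf{Dom}_{K^{(0)},2}$, yet membership in that set is itself a consequence of the contraction. Resolving it forces one to carry $D_s\le 2D_0$ as an invariant throughout the induction and to divide the tolerance $\epsilon_0$ between keeping the iterate in the set and reaching accuracy $\epsilon$. A second, equally essential point is the second-order estimate \eqref{eq:plan_second_order}: a naive Lipschitz bound on $J$ would give only an $O(\epsilon_0)$ error and hence demand $\epsilon_0\lesssim\epsilon$; it is the optimality of the Gauss--Newton increment that upgrades the error to $O(\epsilon_0^2)$ and makes the weaker $\sqrt{\epsilon}$-tolerance in \eqref{cond5-1} sufficient.
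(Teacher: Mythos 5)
Your proposal is correct in substance and rests on the same two pillars as the paper's proof: the almost-smoothness identity \eqref{diffK} of Fazel et al., and the fact that the exact policy-iteration increment minimizes the associated quadratic form, so an $\epsilon_0$-perturbation of the $Q$-parameters costs only a second-order penalty in the objective; the inductive invariant $J(K^{(s)})\le 2J(K^{(0)})$, the resolution of the circularity, and the final iteration count also match the paper step for step. The one genuine difference is the one-step decomposition. You route the estimate through the exact iterate $K'_{\mathrm{ex}}$ (contraction of the exact step, plus a penalty for deviating from it), whereas the paper substitutes the perturbed increment $K'-K=(\Theta^{*}_{22})^{-1}(\Delta_3-E_K)$ directly into \eqref{diffK}, so the cross terms cancel and one lands immediately on \eqref{diffup}, in which both the descent term and the error term carry the \emph{same} weight $\Sigma_{K',\gamma}$ of the actual new iterate. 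Your route is conceptually cleaner but hides a technical wrinkle: the weight in the almost-smoothness identity depends on the policy being evaluated, so comparing $J(K^{(s+1)})$ with $J(K'_{\mathrm{ex}})$ produces, besides the genuine second-order term $\Tr[\Sigma_{K^{(s+1)},\gamma}\,\delta^\top(R+\gamma B^\top P_{K,\gamma}B)\delta]$ with $\delta=K^{(s+1)}-K'_{\mathrm{ex}}$, a mismatch term $\Tr[(\Sigma_{K'_{\mathrm{ex}},\gamma}-\Sigma_{K^{(s+1)},\gamma})E_K^\top(\Theta^{*}_{22})^{-1}E_K]$ that is only first order in $\epsilon_0$ and is multiplied by the descent quantity $\Tr[E_K^\top(\Theta^{*}_{22})^{-1}E_K]$, which is of the order of $J(K^{(s)})-J(K^*)$ rather than $O(1)$. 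This term is not covered by your stated second-order bound; it has to be absorbed into the contraction factor, which requires the additional (mild, and for small $\epsilon$ implied by \eqref{cond5-1}) smallness condition $\epsilon_0\lesssim \gamma^2\sigma_{\min}(D_{\widetilde\omega})/\|\Sigma_{K^*,\gamma}\|_2$. The paper's direct substitution avoids this issue entirely, at the price of the slightly less transparent recursion \eqref{eq:policy_iteration_descent0} with its extra additive $\frac{\beta}{1-\beta}J(K^*)$ term.
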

\begin{proof}
We first introduce some notation which is used in this proof. Let $\widetilde Q(\cdot,\Theta)$ be an $\epsilon_0-$approximation of the state action value function $Q_K(\cdot)$ and 
    denote by $\Theta^{\star}$ the true value, i.e., $Q_K(\cdot)=\widetilde Q(\cdot,\Theta^{\star})$. Then the improved policy is given by $K'=-\Theta_{22}^{-1}\Theta_{21}$.
	Next, we obtain from Lemma~6 in \cite{Fazel2018Global} that
	\be\label{diffK}
	\begin{aligned}
		&J(K')-J(K)\\
		=&\Tr[2\Sigma_{K',\gamma}(K'-K)^\top E_K+\Sigma_{K',\gamma}(K'-K)^\top (R+\gamma B^\top P_{K,\gamma}B)(K'-K)],
	\end{aligned}
	\ee
	where $E_K=(R+\gamma B^\top P_{K,\gamma}B)K+\gamma B^\top P_{K,\gamma}A$. Furthermore, we note that $R+\gamma B^\top P_{K,\gamma}B=\Theta^*_{22}$ and $\gamma B^\top P_{K,\gamma}A=\Theta^*_{21}$ holds by \eqref{piex}. Let $\Delta_1 := \Theta_{21}-\Theta_{21}^*$ and $\Delta_2:=\Theta_{22}-\Theta_{22}^*$. 
	Then we compute the difference between $K'$ and the policy $-(\Theta^{*}_{22})^{-1}\Theta^{*}_{21}$ generated by the true policy iteration: 
	\be\label{diffkk2}
	\begin{aligned}
		K'+(\Theta^{*}_{22})^{-1}\Theta^{*}_{21}=&-(\Theta^{*}_{22})^{-1}(I+\Delta_2(\Theta^{*}_{22})^{-1})^{-1}(\Theta^{*}_{21}+\Delta_1)+(\Theta^{*}_{22})^{-1}\Theta^{*}_{21}\\
		=&(\Theta^{*}_{22})^{-1}(I+\Delta_2(\Theta^{*}_{22})^{-1})^{-1}(\Delta_2(\Theta^{*}_{22})^{-1}\Theta^{*}_{21}-\Delta_1)\\
		=&(\Theta^{*}_{22})^{-1}\Delta_3
	\end{aligned}
	\ee
	with $\Delta_3=(I+\Delta_2(\Theta^{*}_{22})^{-1})^{-1}(\Delta_2(\Theta^{*}_{22})^{-1}\Theta^{*}_{21}-\Delta_1)$. 
	Next, we note that \eqref{pi} implies 
	\be\label{eq:diff_K}
	K+(\Theta^{*}_{22})^{-1}\Theta^{*}_{21}=(\Theta_{22}^{\star})^{-1}E_K
	\ee
	which together with \eqref{diffkk2} yields
	\be\label{diffkk}
	K'-K=(\Theta^{*}_{22})^{-1}(\Delta_3-E_K)\ee Since $\|\Theta-\Theta_K^*\|_F\leq \epsilon_0$, it holds $\|\Delta_1\|_F\leq \epsilon_0$, $\|\Delta_2\|_F\leq \epsilon_0$ and $\|\Delta_3\|_F$ has the upper bound
	\be\label{delta3_estimation}
	\|\Delta_3\|_F\leq \frac{\epsilon_0}{1-\epsilon_0 \|R^{-1}\|_2}(\gamma\|R^{-1}\|_2\|B\|_2\|A\|_2\|P_{K,\gamma}\|_2+1)\leq C_0\epsilon_0(1-\gamma) J(K)+2\epsilon_0,
	\ee
	where the second inequality follows by \eqref{eq:J_est_1} with 
	\begin{align*}
	C_0=\frac{2\|R^{-1}\|_2\|B\|_2\|A\|_2}{\sigma_{\mathrm{min}}(D_{\widetilde \omega})}.
	\end{align*}
	Using \eqref{diffK} and \eqref{diffkk}, the differences of the cumulative cost between the original policy $K$ and the improved policy $K'$ can be represented as: 
	\be\label{diffup}
	\begin{aligned}
		J(K')-J(K)=\Tr[-\Sigma_{K',\gamma}E_K^\top (\Theta_{22}^*)^{-1}E_K+\Sigma_{K',\gamma}\Delta_3^\top(\Theta^{*}_{22})^{-1}\Delta_3].
	\end{aligned}
	\ee
	For the first term in \eqref{diffup}, we have
	\be\label{bound5-1}
	\begin{aligned}
		\Tr[\Sigma_{K',\gamma}E_K^\top (\Theta_{22}^*)^{-1}E_K]\ge& \sigma_{\mathrm{min}}(\Sigma_{K',\gamma})\Tr[E_K^\top (\Theta_{22}^*)^{-1}E_K]\\
		\geq& \frac{\gamma^2\sigma_{\mathrm{min}}(D_{\widetilde \omega})}{\|\Sigma_{K^*,\gamma}\|_2}(J(K)-J(K^*)),
	\end{aligned}
	\ee
	where the second inequality is derived by the fact $\Sigma_{K',\gamma}\succeq \gamma^2 D_{\widetilde \omega} $ and Lemma~11 in the supplementary material of \cite{Fazel2018Global}. More precisely, one can check that this lemma is also valid for the setting in this paper.
	For the second term in \eqref{diffup}, using \eqref{eq:J_est_2} and \eqref{delta3_estimation} we get the upper bound
	\be\label{bound5-2}
	\begin{aligned}
		&\Tr[\Sigma_{K',\gamma}\Delta_3^\top (\Theta^*_{22})^{-1}\Delta_3]\\
		\le& \|\Sigma_{K',\gamma}\|_F\Tr[\Delta_3^\top(\Theta^{*}_{22})^{-1}\Delta_3]\le \|\Sigma_{K',\gamma}\|_F\|R^{-1}\|_2\Tr[\Delta_3^\top\Delta_3]\\
		\le&\|R^{-1}\|_2\frac{
			[C_0(1-\gamma)J(K)+2]^2\epsilon_0^2}{\sigma_{\mathrm{min}}(S)}J(K')
	\end{aligned}
	\ee
	By \eqref{bound5-1} and \eqref{bound5-2}, it is direct to obtain the following inequality:
	\be\label{eq:policy_iteration_descent0}
	J(K')-J(K^*)\leq \frac{1-\alpha}{1-\beta}(	J(K)-J(K^*))+\frac{\beta}{1-\beta}J(K^*),
	\ee
	where $\alpha=\frac{\gamma^2\sigma_{\mathrm{min}}(D_{\widetilde \omega})}{\|\Sigma_{K^*,\gamma}\|_2}<1$ and $\beta = \|R^{-1}\|_2\frac{[C_0(1-\gamma) J(K)+2]^2\epsilon_0^2}{\sigma_{\mathrm{min}}(S)}$. 
	
	We can start with $K^{(0)}$ and set $\beta=\|R^{-1}\|_2\frac{[2C_0(1-\gamma) J(K^{(0)})+2]^2\epsilon_0^2}{\sigma_{\mathrm{min}}(S)}$. Then let $\epsilon_0$ be small enough such that $\beta\leq\alpha/2$.
	Thus, the bound of $\epsilon_0$ is $O(\frac{1}{2C_0(1-\gamma)J(K^{(0)})+2})$. Next, we show that \be\label{bound:unf-bound1} J(K^{(t)})\le 2J(K^{(0)}),\ee which implies that the inequality \eqref{eq:policy_iteration_descent0} holds with $K = K^{(t)}$ and $K'=K^{(t+1)}$ for all iterates.
	
	We use induction to prove this uniform bound \eqref{bound:unf-bound1}. When $t=0$, this inequality \eqref{bound:unf-bound1} holds obviously.
	Then we assume that \eqref{bound:unf-bound1} holds with $K^{(t)}$.
	Using $\beta\leq\alpha/2$ in connection with \eqref{eq:policy_iteration_descent0}, we observe that 
	if $J(K^{(t)})\geq 2J(K^*)$, then 
	\[J(K^{(t+1)})\leq \frac{1-\alpha}{1-\beta}J(K^{(t)})+(1-\frac{1-\alpha-\beta}{1-\beta})J(K^*)\le \frac{1-\alpha/2}{1-\beta}J(K^{t})\leq 2J(K^{(0)})\]
	by the inequality \eqref{eq:policy_iteration_descent0}.
	Otherwise it holds $J(K^{(t+1)})\leq \frac{2-\alpha}{1-\beta}J(K^*)<2J(K^{(0)})$.
	Thus, the bound \eqref{bound:unf-bound1} holds for all $t$.
	
	Hence, we have following inequality:
	\be\label{eq:policy_iteration_descent}
	J(K^{(t)})-J(K^*)\leq \left( \frac{1-\alpha}{1-\beta}\right)^t(J(K^{(0)})-J(K^*))+\frac{\beta}{\alpha-\beta}J(K^*).
	\ee
	Furthermore, we also require that $\frac{\beta}{\alpha-\beta}J(K^*)\leq \frac{2\beta}{\alpha}J(K^*)\leq \frac{\epsilon}2$, which is equivalent to $\beta\leq \frac{\alpha\epsilon}{4J(K^*)}$. Thus, the upper bound of $\epsilon_0$ should be $ C_1\frac{\sqrt{\epsilon}}{\sqrt{\|\Sigma_{K^*,\gamma}\|_2}J(K^*)}$, where 
	\[C_1=O\left(\frac{1}{[C_0(1-\gamma)J(K^{(0)})+2]}\right).\]
	Then we can verify that $\left( \frac{1-\alpha}{1-\beta}\right)^T(J(K^{(0)})-J(K^*))\leq \frac{\epsilon}2$ such that \eqref{eq:thm_policy_iteration_convergence} is proved. Finally, we  assume w.l.o.g. that $J(K^{(0)})-J(K^*)>\epsilon$. We can guarantee that $\frac{\beta}{\alpha-\beta}\leq \frac{\epsilon}{2J(K^*)}\le \frac{J(K^{(0)})-J(K^*)}{2J(K^*)}$. Inserting this in \eqref{eq:policy_iteration_descent}, we obtain
	\begin{align*}
	J(K^{(t)})-J(K^*)&\leq \left[\left( \frac{1-\alpha}{1-\beta}\right)^t+\frac12\right](J(K^{(0)})-J(K^*))\\
	&\leq 2(J(K^{(0)})-J(K^*))
	\end{align*}
	Hence,  $K^{(t)}\in $       \textbf{Dom}$_{K^{(0)},2}\subset$ \textbf{Dom}
    holds by \cref{lem:restriction_to_dom} for all iterates, if $\gamma$ is sufficiently close to $1$.
\end{proof}

Using the definition of $\alpha$, we observe that the approximation parameter $\epsilon_0$ has an upper bound $O\left({(1-\gamma)^{\frac32}\epsilon^{\frac12}}\right)$. Thus, for each TD-learning of a fix $K^{(t)}$, it needs $O\left(\frac1{(1-\gamma)^5\epsilon}\right)$ samplings by \cref{lem:vtop} and \cref{thm:semi-gradient_update}. However if we use  stochastic semi-gradient descent, the sample complexity for each policy evaluation of this algorithm becomes   $O\left(\frac1{(1-\gamma)^8\epsilon^2}\right)$ by \cref{thm:state_value_estimation_stochastic}.	
\section{The Policy Gradient method}\label{sect:PG}
In RL, the policy gradient method \cite{williams1992simple,Sutton1999Policy} is widely used. In this section, we apply the policy gradient method to the problem  \eqref{tck} and analyze the convergence of this method. 

In order to compute the policy gradient, we have to know the score function $\nabla_K\log\pi_K$ of the policy $\pi_K$, which is given by
\be
\begin{aligned}
\quad\nabla_K\log\pi_K(u|x)=\frac{1}{\sigma^2}(u-Kx)x^\top.
\end{aligned}
\ee
By the policy gradient theorem in \cite{Sutton1999Policy}, we obtain the policy gradient
\be\label{gradient}
\begin{aligned}
G(K)&=\frac{1}{\sigma^2}\mathbb{E}_{p_K}\left[\sum_{t=0}^{\infty}(u_t-Kx_t)x_t^\top \sum_{k=t}^\infty\gamma^kc_k\right]\\
&=\frac{1}{\sigma^2}\mathbb{E}_{p_K}\left[\sum_{t=0}^{\infty}\gamma^t(u_t-Kx_t)x_t^\top Q_{K}(x_t,u_t)\right].
\end{aligned}
\ee
The policy gradient \eqref{gradient} is equivalent to $\nabla J(K)$ which is shown in Lemma 5.2.
For the representation of the gradient in \eqref{gradient} it is straightforward to design
an estimation.
After achieving some triples $\{x_t,u_t,c_t\}_{t=0}^L$ generated by the policy $\pi_K$ in problem $\eqref{tck}$, we can compute the sample gradient:
\be\label{sg}
\hat G^{(L)}(K)=\frac{1}{\sigma^2}\sum_{t=0}^L\left[(u_t-Kx_t)x_t^\top \sum_{k=t}^L\gamma^kc_k\right].
\ee
We apply the stochastic gradient descent method to the problem \eqref{tck}, which is summarized in \cref{samplegrad}.

\begin{algorithm}[H]
	\caption{The Policy Gradient Method}
\label{samplegrad}
	\begin{algorithmic}[1]
		\REQUIRE Initial policy $K_{0}$, roll out length $L$, step size $\alpha$.

\FOR {$s=0,1,\cdots,T-1$}
            \STATE Simulate $u_t=K^{(s)}x_t+\eta_t$ for $L$ steps starting from $x_0=0$ and obtain $D^{(s)}=\{x^{(s)}_t,u^{(s)}_t,c^{(s)}_t\}_{t=0}^L$.
\STATE Compute $\hat G^{(L)}(K^{(s)})$ by \eqref{sg}.
\STATE Update policy $K^{(s+1)}=K^{(s)}-\alpha  \hat G^{(L)}(K^{(s)})$. 
\ENDFOR 
	\end{algorithmic}	
\end{algorithm}

Since the  estimator $\hat G^{(L)}(K)$ is biased, we have to control the bias by increasing the length $L$. The next lemma shows that  how the parameter $L$ influences the bias and the variance of the estimator $\hat G^{(L)}(K)$.

\begin{lemma}\label{bounded}
Let $K$ be a stable policy. Then it holds 
	\begin{eqnarray}
	\left\Vert \nabla J(K)-\mathbb{E}[\hat G^{(L)}(K)]\right\Vert_2&\leq& C_2\frac{\Gamma_K^2}{1-\rho^2}\gamma^{L+1}(\frac{\Gamma_K^2}{1-\rho^2}+\frac{1}{1-\gamma}),\label{lemmaeq5}\\
	\mathbb{E}\left\Vert \hat G^{(L)}(K)\right\Vert^2_F&\leq& C_3\frac{L^3}{(1-\gamma^2)},\label{lemmaeq6}
	\end{eqnarray} 
	where $\rho\in (\rho(A+BK),1)$ and the constants $C_2$ and $C_3$ depend on $\|A\|_2$, $\|B\|_2$, $\|S\|_2$, $\|R\|_2$, $\|K\|_2$, $\Gamma_K$, $\frac1{1-\rho^2}$, $\widetilde\omega$, $n$, $d$ and $(1-\gamma)J(K)$.
\end{lemma}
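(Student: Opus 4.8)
The plan is to work directly with the two estimators after substituting $u_t-Kx_t=\eta_t$, so that both \eqref{gradient} and \eqref{sg} read $\tfrac1{\sigma^2}\sum_t \eta_t x_t^\top(\text{cost-to-go})$. Comparing $\nabla J(K)$ with $\mathbb{E}[\hat G^{(L)}(K)]$ term by term, I would split the bias into two pieces according to which sum is truncated,
\[
\nabla J(K)-\mathbb{E}[\hat G^{(L)}(K)]=\underbrace{\tfrac1{\sigma^2}\mathbb{E}\!\left[\sum_{t=0}^{L}\eta_t x_t^\top\!\!\sum_{k=L+1}^{\infty}\gamma^k c_k\right]}_{T_1}+\underbrace{\tfrac1{\sigma^2}\mathbb{E}\!\left[\sum_{t=L+1}^{\infty}\eta_t x_t^\top\!\!\sum_{k=t}^{\infty}\gamma^k c_k\right]}_{T_2},
\]
$T_1$ accounting for truncating the inner cost-to-go at $L$ for the early steps $t\le L$, and $T_2$ for dropping the outer indices $t>L$ entirely. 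I would then bound $\|T_1\|_2$ and $\|T_2\|_2$ separately; they will furnish the first and second summand of the bracket in \eqref{lemmaeq5}, respectively.

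The technical heart of the argument, and what I expect to be the main obstacle, is a cancellation identity exploiting the Gaussianity and independence of $\eta_t$. For $t<k$, writing $M=A+BK$ and using \eqref{eq:state} I would decompose $x_k=M^{k-t}x_t+M^{k-t-1}B\eta_t+r_k$, where $r_k$ collects the noise injected strictly after time $t$ and is therefore independent of $(x_t,\eta_t)$ with $\mathbb{E}[r_k]=0$. Setting $\widetilde S=S+K^\top RK$, in the product $\eta_t x_t^\top\,x_k^\top \widetilde S x_k$ the term constant in $\eta_t$ and the term quadratic in $\eta_t$ both vanish in expectation (they produce first- and third-order odd Gaussian moments of $\eta_t$, which are zero, since $\eta_t$ is independent of $x_t$ and $r_k$); the contributions of $\eta_k$ in $c_k$ vanish for the same reason. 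Only the single term linear in $\eta_t$ survives, and with $\mathbb{E}[\eta_t\eta_t^\top]=\sigma^2 I_d$ it yields
\[
\mathbb{E}\!\left[\eta_t x_t^\top\,x_k^\top \widetilde S x_k\right]=2\sigma^2 B^\top\big(M^{k-t-1}\big)^\top \widetilde S\,M^{k-t}\,\mathbb{E}[x_tx_t^\top].
\]
This is the step that replaces a crude $O(L)$ count of surviving cross terms by a geometric decay, and hence removes a spurious factor of $L$ from the bias.

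With this identity in hand the two pieces are routine. For $T_2$ I would use the state–action form \eqref{sav}, so that each inner sum is $\gamma^t Q_K(x_t,u_t)$ and the per-step contribution is $2E_K\gamma^t\mathbb{E}[x_tx_t^\top]$ with $E_K=(R+\gamma B^\top P_{K,\gamma}B)K+\gamma B^\top P_{K,\gamma}A$; bounding $\|\mathbb{E}[x_tx_t^\top]\|_2\le \tfrac{\Gamma_K^2}{1-\rho^2}\|D_{\widetilde\omega}\|_2$ via \eqref{eq:stability} and summing $\sum_{t>L}\gamma^t=\tfrac{\gamma^{L+1}}{1-\gamma}$ gives the $\tfrac{\Gamma_K^2}{1-\rho^2}\cdot\tfrac1{1-\gamma}$ contribution. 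For $T_1$ I would first condition the tail on the state at time $L+1$, using $\mathbb{E}\big[\sum_{k\ge L+1}\gamma^k c_k\,\big|\,x_{L+1}\big]=\gamma^{L+1}V_K(x_{L+1})$ from \eqref{valuef}, and then apply the identity above with $\widetilde S$ replaced by $P_{K,\gamma}$ and $k=L+1$, obtaining $T_1=2\gamma^{L+1}\sum_{t=0}^{L}B^\top(M^{L-t})^\top P_{K,\gamma}M^{L+1-t}\mathbb{E}[x_tx_t^\top]$. The stability bound \eqref{eq:stability} together with $\sum_{t=0}^{L}\rho^{2(L-t)}\le\tfrac1{1-\rho^2}$ and the estimate $\|P_{K,\gamma}\|_2\le\tfrac{(1-\gamma)J(K)}{\gamma\sigma_{\min}(D_{\widetilde\omega})}$ from \eqref{eq:J_est_1} then produces the $\tfrac{\Gamma_K^2}{1-\rho^2}\cdot\tfrac{\Gamma_K^2}{1-\rho^2}$ contribution, and all the constants generated depend only on the quantities listed, so adding $\|T_1\|_2+\|T_2\|_2$ yields \eqref{lemmaeq5}.

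For the second moment bound \eqref{lemmaeq6} I would not need any cancellation. Using that $\eta_tx_t^\top$ is rank one, so $\|\eta_tx_t^\top\|_F=\|\eta_t\|_2\|x_t\|_2$, and applying the Cauchy–Schwarz inequality twice (once over the $L+1$ outer indices $t$ and once over the at most $L+1$ inner indices $k$) reduces the estimate to
\[
\mathbb{E}\big\|\hat G^{(L)}(K)\big\|_F^2\le\frac{(L+1)^2}{\sigma^4}\sum_{t=0}^{L}\sum_{k=t}^{L}\gamma^{2k}\,\mathbb{E}\!\left[\|\eta_t\|_2^2\|x_t\|_2^2 c_k^2\right].
\]
Each summand is an eighth-order moment of jointly Gaussian quantities (states are linear in the noise), hence uniformly bounded in $t,k$ by a constant depending on the matrix norms, $\Gamma_K$, $\tfrac1{1-\rho^2}$, the moments of $\widetilde\omega$, $n$, $d$ and $\sigma$; splitting $\mathbb{E}[\|\eta_t\|^2\|x_t\|^2c_k^2]$ by Cauchy–Schwarz and invoking the uniform bound on $\mathbb{E}_{\mu_K}\|x\|_2^{2k}$ noted after Theorem~\ref{thm:state_value_estimation_stochastic} makes this precise. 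Finally $\sum_{t=0}^{L}\sum_{k=t}^{L}\gamma^{2k}\le (L+1)\sum_{k=0}^{L}\gamma^{2k}\le\tfrac{L+1}{1-\gamma^2}$, which gives the claimed $O\!\big(L^3/(1-\gamma^2)\big)$ and completes \eqref{lemmaeq6}.
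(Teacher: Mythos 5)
Your proposal is correct and follows essentially the same route as the paper: the same two-term split of the bias into the truncated tail of the cost-to-go (bounded via $\gamma^{L+1}V_K(x_{L+1})$ and the stability estimate) and the dropped outer indices $t>L$ (bounded via the $Q_K$/$E_K$ form), the same odd-Gaussian-moment cancellation yielding the $2\sigma^2 B^\top(M^{k-t-1})^\top\widetilde S M^{k-t}\mathbb{E}[x_tx_t^\top]$ identity (the paper derives it by iterated conditioning on the filtration $\mathcal F_t$ rather than by your explicit decomposition of $x_k$, a cosmetic difference), and the same double Cauchy--Schwarz plus uniform eighth-moment bound giving $O(L^3/(1-\gamma^2))$ for the second moment.
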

\begin{proof}
	Define the event fields $\mathcal F_t$ generated by $(x_0,x_1,\cdots,x_t)$ and the operator $\mathcal T(X)=(A+BK)^\top X(A+BK)$. We observe that $\eta_t$ and $\omega_t$ are independent from $\mathcal F_t$. By the definition of the value function in \eqref{valuef}, it is straightforward to obtain the conditional expection of the   cumulative cost:
	\[\mathbb{E}\left[\sum_{k=t}^\infty \gamma^{k}c_k\Big|\mathcal F_{t}\right]=\gamma^{t}V_K(x_{t})=\gamma^{t}x_{t}^\top P_{K,\gamma}x_{t}+\frac{\gamma^{t+1}}{1-\gamma}\Tr[P_{K,\gamma}D_{\widetilde\omega}]
	+\frac{\gamma^{t}\sigma^2\Tr[R]}{1-\gamma}.\] 
	
	First, we claim that $G(K)$ is equivalent to $\nabla J(K)$. To this end, we verify the following identity:
	\be
	\label{boundeqs1}
	\begin{aligned}
		&\mathbb{E}\left[(u_t-Kx_t)x_t^\top \sum_{k=t}^\infty\gamma^kc_k\Big|\mathcal F_t\right]\\
		=&\mathbb{E}\left[\eta_t x_t^\top \sum_{k=t+1}^\infty\gamma^kc_k\Big|\mathcal F_t\right]+\mathbb{E}\left[\eta_tx_t^\top \gamma^tc_t\Big|\mathcal F_t\right]\\
		=&\mathbb{E}\left[\mathbb{E}[\eta_tx_t^\top \sum_{k=t+1}^\infty \gamma^kc_k|\mathcal F_{t+1}]\Big|\mathcal F_{t}\right]\\
		&+\gamma^t\mathbb{E}\left[\eta_t (x_t^\top 	Sx_t+(Kx_t+\eta_t)^\top R(Kx_t+\eta_t))\Big|\mathcal F_t\right]x_t^\top\\
		=&\mathbb{E}\left[\eta_t\gamma^{t+1}V_K(x_{t+1})\Big|\mathcal F_{t}\right]x_t+2\sigma^2\gamma^tRKx_tx^\top_t\\
		=&\gamma^{t+1}\mathbb{E}\left[\eta_tx_{t+1}^\top P_{K,\gamma} x_{t+1}\Big|\mathcal F_{t}\right]x_t+2\sigma^2\gamma^tRKx_tx^\top_t\\
		=&2\sigma^2\gamma^t[\gamma B^\top P_{K,\gamma}(A+BK)+RK]x_tx_t^\top.
	\end{aligned}\ee
	The third equality is valid since $\mathbb{E}[\eta_t]=0$ and 
	\be\label{eq:symmetrie}
	\mathbb{E}\left[\eta_t\eta_t^{\top}B^{\top}P_{K,\gamma}B\eta_t\Big|\mathcal F_{t}\right]=0,
	\ee
	which holds due to the symmetry of $\eta_t$.
	In the last equality of \eqref{boundeqs1}, we have used similar arguments in connection with the fact $x_{t+1}=(A+BK)x_t+\omega_t+B\eta_t$.
	Taking the sum $\sum\limits_{t=0}^{\infty}(\cdot)$, applying $\mathbb{E}_{\rho_K}[\cdot]$ to both sides of \eqref{boundeqs1} and 
	multiplying by $\frac{1}{\sigma^2}$ yields $G(K)=\nabla J(K)$, see \eqref{exgradient2}.

	Since $\hat G^{(L)}(K)$ is the estimator of $\nabla J(K)$, we focus on the bound of the bias and split the bias into two parts
	\be\label{boundeq2}
	\begin{aligned}
		&\nabla J(K)-\mathbb{E}[\hat G^{(L)}(K)]\\
		=&\frac{1}{\sigma^2}\mathbb{E}\left[\sum_{t=L+1}^\infty(u_t-Kx_t)x_t^\top \sum_{k=t}^\infty\gamma^kc_k\right]+\frac{1}{\sigma^2}\mathbb{E}\left[\sum_{t=0}^L(u_t-Kx_t)x_t^\top \sum_{k=L+1}^\infty \gamma^kc_k\right],
	\end{aligned}
	\ee 
    where we have used that $G(K)=\nabla J(K)$. In the following, we will further simplify and estimate the above two terms, respectively. 
	To this end, we show for each $t\leq L$ that
	\be\label{boundeq1}
		\mathbb{E}\left[(u_t-Kx_t)x_t^\top \sum_{k=L+1}^\infty \gamma^kc_k\Big|\mathcal F_{t}\right]\\
		=2\sigma^2\gamma^{L+1}B^\top \mathcal T ^{L-t}(P_{K,\gamma})(A+BK)x_tx_t^\top .
	\ee
	For $t<L$, we observe that
		\be\label{boundeq11}
	\begin{aligned}
		&\mathbb{E}\left[(u_t-Kx_t)x_t^\top \sum_{k=L+1}^\infty \gamma^kc_k\Big|\mathcal F_{t}\right]\\
		=&\mathbb{E}\left[\mathbb{E}\left[\eta_tx_t^\top \sum_{k=L+1}^\infty \gamma^kc_k\Big|\mathcal F_{L+1}\right]\Big|\mathcal F_{t}\right]\\
		=&\gamma^{L+1}\mathbb{E}\left[\eta_tx_t^\top V_{K}(x_{L+1})\Big|\mathcal F_t\right]\\
		=&\gamma^{L+1}\mathbb{E}\left[\eta_t[(A+BK)x_L+\widetilde\omega_L]^\top P_{K,\gamma}[(A+BK)x_{L}+\widetilde\omega_L]\Big|\mathcal F_t\right]x_t^\top \\
		=&\gamma^{L+1}\mathbb{E}\left[\eta_tx_{L}^\top \mathcal T (P_{K,\gamma})x_{L}\Big|\mathcal F_t\right]x_t^\top (since\;\widetilde\omega_L\;is\;independent\;from\; \eta_t)\\
		&\cdots\\
		=&\gamma^{L+1}\mathbb{E}\left\{\eta_tx_{t+1}^\top \mathcal T ^{L-t}(P_{K,\gamma})x_{t+1}\Big|\mathcal F_t\right\}x_t^\top \\
		=&2\sigma^2\gamma^{L+1}B^\top \mathcal T ^{L-t}(P_{K,\gamma})(A+BK)x_tx_t^\top ,
	\end{aligned}
	\ee
	and for $t=L$ it holds
		\be\label{boundeq12}
	\begin{aligned}
		&\mathbb{E}\left[(u_t-Kx_t)x_t^\top \sum_{k=t+1}^\infty \gamma^kc_k\Big|\mathcal F_{t}\right]\\
		=&\mathbb{E}\left[\mathbb{E}\left[\eta_tx_t^\top \sum_{k=t+1}^\infty \gamma^kc_k\Big|\mathcal F_{t+1}\right]\Big|\mathcal F_{t}\right]\\
		=&\gamma^{t+1}\mathbb{E}\left[\eta_tx_t^\top V_{K}(x_{t+1})\Big|\mathcal F_t\right]\\
		=&\gamma^{t+1}\mathbb{E}\left[\eta_t[(A+BK)x_t+\widetilde\omega_t]^\top P_{K,\gamma}[(A+BK)x_{t}+\widetilde\omega_t]\Big|\mathcal F_t\right]x_t^\top \\
		=&2\sigma^2\gamma^{t+1}B^\top P_{K,\gamma}(A+BK)x_tx_t^\top .
	\end{aligned}
	\ee

	In \eqref{boundeq11} and \eqref{boundeq12}, similar arguments as in \eqref{boundeqs1} are used, in particular $\mathbb{E}[\eta_t]=0$. Hence, \eqref{boundeq1} holds.

	Since $K$ is stable, \eqref{eq:stability} yields 
	\be\label{eq:exp_xtxt}
	\left\Vert  \mathbb{E}[x_{t}x_{t}^\top ]\right\Vert_2\leq {\Gamma_K^2}\sum_{k=0}^t\rho^{2k}\Vert D_{\widetilde \omega}\Vert_2
	\ee
	for some constant $0<\rho<1$. Using this, \eqref{eq:J_est_1} and \eqref{boundeq1}, we can estimate the second term in \eqref{boundeq2} as follows
	\be\label{lemmaeq4}
	\begin{aligned}
		&\left\Vert\frac{1}{\sigma^2}\mathbb{E}\left[\sum_{t=0}^L(u_t-Kx_t)x_t^\top \sum_{k=L+1}^\infty \gamma^kc_k\right]\right\Vert_2\\
		\leq& {2\gamma^{L+1}}\Vert B\Vert_2\sum_{t=0}^L\left  \Vert P_{K,\gamma}\Vert_2 \Gamma_K^{3}\rho^{2L-2t+1}\Vert\mathbb{E}[x_tx_t^\top ]\right\Vert_2\\
		\leq& 2\gamma^{L+1}\Vert B\Vert_2\frac{(1-\gamma)}{\gamma\sigma_{\mathrm{min}}(D_{\widetilde \omega})}J(K)\frac{\Gamma_K^5}{(1-\rho^2)^2}\Vert D_{\widetilde \omega}\Vert_2.
	\end{aligned}
	\ee
	
	By using \eqref{boundeqs1}, \eqref{eq:exp_xtxt} and \eqref{eq:J_est_1}, the first term in \eqref{boundeq2} can be estimated as:
	\be\label{lemmaeq3}
	\begin{aligned}
		&\left\Vert\frac{1}{\sigma^2}\mathbb{E}\left[\sum_{t=L+1}^\infty(u_t-Kx_t)x_t^\top \sum_{k=t}^\infty\gamma^kc_k\right]\right\Vert_2\\
		\leq &2\Big(\Vert R\Vert_2\Vert K\Vert_2+\Gamma_K\Vert B\Vert_2\Vert P_{K,\gamma}\Vert_2\Big)\sum_{t=L+1}^\infty \frac{\Gamma_K^2}{1-\rho^2}\gamma^t\Vert D_{\widetilde \omega}\Vert_2\\
		\leq&2\Big(\Vert R\Vert_2\Vert K\Vert_2+\Gamma_K\Vert
        B\Vert_2\frac{(1-\gamma)}{\gamma\sigma_{\mathrm{min}}(D_{\widetilde
    \omega})}J(K)\Big)\Vert D_{\widetilde
    \omega}\Vert_2\frac{\gamma^{L+1}\Gamma_K^2}{(1-\rho^2)(1-\gamma)}.
	\end{aligned}
	\ee
	Combining \eqref{lemmaeq4} and \eqref{lemmaeq3} yields \eqref{lemmaeq5} with
	\[
	C_2=4\Vert D_{\widetilde \omega}\Vert_2
	\cdot\max\left\{\Gamma_K\Vert B\Vert_2
	\frac{(1-\gamma)J(K)}{\gamma\sigma_{\mathrm{min}}(D_{\widetilde \omega})},
	\Vert R\Vert_2\Vert K\Vert_2\right\}.
	\]

	Finally, we derive a bound for the variance of $\hat G^{(L)}$.
	\be\label{boundkeypg}
	\begin{aligned}
		\mathbb E\left\Vert \hat G^{(L)}(K)\right\Vert^2_F=& \frac{1}{\sigma^4}\mathbb E\left\Vert \sum_{t=0}^L(u_t-Kx_t)x_t^\top \sum_{k=t}^L\gamma^kc_k \right\Vert_F^2\\
		\leq&  \frac{L}{\sigma^4}\sum_{t=1}^L\mathbb E\left[\|x_{t}\|^2_2\|\eta_{t}\|_2^2\Big(\sum_{k=t}^L\gamma^kc_k\Big)^2\right] \\
		\leq& \frac{L}{\sigma^4}\sum_{t=1}^L(L-t+1)\mathbb E\left[\|x_{t}\|^2_2\|\eta_{t}\|_2^2\sum_{k=t}^L\gamma^{2k}c^2_k\right] \\
		\leq& \frac{L^2}{\sigma^4}\sum_{t=1}^L\gamma^{2t}\sum_{k=t}^L\mathbb E\left[\|x_{t}\|^2_2\|\eta_{t}\|_2^2c^2_k\right].
	\end{aligned}
	\ee
	Hence, we only need to estimate each term of \eqref{boundkeypg}:
	\be\label{boundkeypg1}
	\mathbb E\left[\|x_{t}\|^2_2\|\eta_{t}\|_2^2c^2_k\right]\leq O\left(\|K\|_2^4+\|S\|_2^2\right)\mathbb E\Big[\|x_k\|^4_2\|x_t\|_2^2\|\eta_t\|_2^2\Big]+O\left(\|R\|_2^2\right)\Tr[D_K]\sigma^6.
	\ee
	Let $\Sigma = \sum_{i=t}^{k-1}(A+BK)^{k-1-i}\widetilde\omega_t$. Then $x_k=(A+BK)^{k-t}x_t+\Sigma$ and $\Sigma$ is independent from $x_t$. 
	We can estimate the bound of $\mathbb E\Big[\|x_k\|^4_2\|x_t\|_2^2\|\eta_t\|_2^2\Big]$:
	\be\label{boundkeypg2}
	\begin{aligned}
		\mathbb E\Big[\|x_k\|^4_2\|x_t\|_2^2\|\eta_t\|_2^2\Big]=&\mathbb E\Big[\big\|(A+BK)^{k-t}x_t+\Sigma\big\|^4_2\|x_t\|_2^2\|\eta_t\|_2^2\Big]\\
		\leq&\mathbb E\Big\{\big[2\|(A+BK)^{k-t}x_t\|_2^2+2\|\Sigma\|_2^2\big]^2\|x_t\|_2^2\|\eta_t\|_2^2\Big\}\\
		\leq &8\mathbb E\Big\{\big[\|(A+BK)^{k-t}x_t\|_2^4+\|\Sigma\|_2^4\big]\|x_t\|_2^2\|\eta_t\|^2_2\Big\}\\
		\leq &8d\sigma^2\mathbb E\big[\|(A+BK)^{k-t}x_t\|_2^4\|x_t\|_2^2\big]+8\mathbb E\|x_t\|^2_2\mathbb E\big(\|\Sigma\|^4_2\|\eta_t\|_2^2\big)\\
		\leq &O\Big(\big[\mathbb E_{\mu_K}\|x\|^6+\Tr[D_K]\mathbb
        E_{\mu_K}\|x\|^4\big]\sigma^2+ \Tr[D_K]\sigma^6\Big).
	\end{aligned}
	\ee
	We observe that \eqref{boundkeypg2} is bounded yielding that \eqref{boundkeypg1} is bounded by a constant $C>0$. Inserting this in \eqref{boundkeypg}, we get the estimation
	\be\label{boundkeypg3}
	\begin{aligned}
		\mathbb E\left\Vert \hat G^{(L)}(K)\right\Vert^2_F
		\leq \frac{CL^3}{\sigma^4(1-\gamma^2)}
	\end{aligned}
	\ee
	such that \eqref{lemmaeq5} holds with $C_3=\frac{C}{\sigma^4}$.
\end{proof}

\begin{assumption}\label{assumption:GLK_bounded}
	There is a positive number $M_G$ such that $\big\|\hat G^{L}(K)\big\|_F\leq M_G$ for any stable $K$. 
\end{assumption}

At the end of this section, we present a theorem that guarantees the convergence of \cref{samplegrad}. In order to prove this convergence, we need to assume that the error tolerance $\epsilon$ and the confidence $\delta$ is small enough such that following inequality holds:
\be\label{bounderr_con}
\delta\epsilon<\min\left(1,\frac{10(J(K^{(0)})-J(K^*))}{3\log [120 (J(K^{(0)})-J(K^*))]}\right).
\ee
\begin{theorem}\label{PGTH}
Let \cref{assumption:GLK_bounded} hold,
	$K^{(0)}$ be stable and suppose that $\gamma$ is sufficiently close to $1$ and $0<\beta<1$ is small enough such that \eqref{eq:lemma_restriction_to_dom2} is satisfied with $\nu=\frac{10}{\delta}$ and $K=K^{(0)}$. For any error tolerance $\epsilon$ and confidence $\delta$  satisfying \eqref{bounderr_con}, suppose that the sample size $L$ is large enough such that \[\gamma^{L+1}\left(\frac{\Gamma^2}{1-\rho^2}+\frac{1}{1-\gamma}\right)\leq O(\sqrt{\delta\epsilon/n})\]
	with $\Gamma=\Gamma_{K^{(0)},\nu}$ and the step size $\alpha$ is chosen to satisfy $\alpha\leq O\left(\frac{\delta\epsilon(1-\gamma^2)\sigma^2}{L^3d}\right)$. After $T$ iterations with $T=O( \frac{1}{\alpha}\log[\frac{ J(K^{(0)})-J(K^*)}{\delta\epsilon}])$, \cref{samplegrad}  yields an iterate $K^{(T)}$ such that
	\[
	J(K^{(T)})-J( K^*)\leq \epsilon
	\]
	holds with probability greater than $1-\delta$. 
\end{theorem}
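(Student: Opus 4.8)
The plan is to run the standard analysis of biased stochastic gradient descent under the Polyak--Lojasiewicz (PL) inequality \eqref{PLcond}, but carried out entirely on the compact level set $\text{\bf{Dom}}_{K^{(0)},\nu}$ with $\nu=\tfrac{10}{\delta}$. By \cref{lem:restriction_to_dom} this set is a compact subset of \textbf{Dom}, so on it the gradient map $\nabla J$ is Lipschitz with a uniform constant $\ell$, the stability constants $\Gamma=\Gamma_{K^{(0)},\nu}$ and $\rho$ are uniform, and the PL constant $\mu$ of \eqref{PLcond_constant} is uniformly positive. First I would write the smoothness descent inequality: using $K^{(s+1)}=K^{(s)}-\alpha\hat G^{(L)}(K^{(s)})$ and the $\ell$-smoothness of $J$,
\[
J(K^{(s+1)})\le J(K^{(s)})-\alpha\langle\nabla J(K^{(s)}),\hat G^{(L)}(K^{(s)})\rangle+\tfrac{\ell\alpha^2}{2}\|\hat G^{(L)}(K^{(s)})\|_F^2 .
\]

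Next I would take conditional expectation given $\mathcal F_s$ and split $\mathbb E[\hat G^{(L)}]=\nabla J+(\mathbb E[\hat G^{(L)}]-\nabla J)$. The bias is controlled by \eqref{lemmaeq5}, the second-moment term by the variance bound \eqref{lemmaeq6} of \cref{bounded} together with \cref{assumption:GLK_bounded}. Applying Cauchy--Schwarz and Young's inequality to $\langle\nabla J,\mathbb E[\hat G^{(L)}]-\nabla J\rangle$ absorbs half of $\|\nabla J\|_F^2$ and leaves $\tfrac{\alpha}{2}b^2$, where $b$ is the bias bound; this is exactly why the hypothesis on $L$ needs to control the bias at the scale $\sqrt{\delta\epsilon/n}$ rather than $\delta\epsilon$. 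Invoking \eqref{PLcond} on the remaining $\tfrac{\alpha}{2}\|\nabla J(K^{(s)})\|_F^2$ then yields a contraction-plus-error recursion
\[
\mathbb E[J(K^{(s+1)})-J(K^*)\mid\mathcal F_s]\le\Big(1-\tfrac{\alpha\mu}{2}\Big)\big(J(K^{(s)})-J(K^*)\big)+e,\qquad e=\tfrac{\alpha}{2}b^2+\tfrac{\ell\alpha^2}{2}\tfrac{C_3L^3}{1-\gamma^2}.
\]
The stated choices of $L$ and $\alpha=O\!\big(\tfrac{\delta\epsilon(1-\gamma^2)\sigma^2}{L^3 d}\big)$ make both contributions to $e$ of order $\alpha\cdot O(\delta\epsilon)$, so the fixed point $2e/(\alpha\mu)$ of the recursion is $O(\delta\epsilon)$.

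Iterating this recursion over $s=0,\dots,T-1$ with $T=O\!\big(\tfrac1\alpha\log\tfrac{J(K^{(0)})-J(K^*)}{\delta\epsilon}\big)$ (the uniform $\mu$ being absorbed into the constant) drives the geometric term below $O(\delta\epsilon)$ and leaves only the residual fixed-point term, giving $\mathbb E[J(K^{(T)})-J(K^*)]\le c\,\delta\epsilon$ for a suitable $c\le 1$. A single application of Markov's inequality then converts this into $\mathbb P\big(J(K^{(T)})-J(K^*)>\epsilon\big)\le c\delta$, which is the desired high-probability guarantee; the technical condition \eqref{bounderr_con} is what keeps the logarithmic factor in $T$ and the constants consistent so that this final estimate closes.

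The main obstacle will be that every statement above is valid only while the iterates remain in $\text{\bf{Dom}}_{K^{(0)},\nu}$: outside this set the smoothness, PL, and stability constants are uncontrolled, and since $\hat G^{(L)}$ is random and biased there is no deterministic guarantee that $K^{(s)}$ stays feasible. I would resolve this with a stopping-time argument. Let $\tau$ be the first index at which $J(K^{(s)})-J(K^*)$ exceeds $\nu\big(J(K^{(0)})-J(K^*)\big)$, i.e.\ the first exit from $\text{\bf{Dom}}_{K^{(0)},\nu}$. Applying the recursion to the stopped process $J(K^{(s\wedge\tau)})-J(K^*)$ and rescaling geometrically by $(1-\tfrac{\alpha\mu}{2})^{-s}$ (while subtracting the accumulated error) produces a supermartingale whose maximal inequality bounds $\mathbb P(\tau\le T)$, and the large safety factor $\nu=\tfrac{10}{\delta}$ is precisely what forces this escape probability below $O(\delta)$. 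Coupling the ``stays feasible'' event with the ``achieves accuracy'' event in this way is the delicate part; once it is in place, the rest reduces to the routine PL-plus-biased-SGD computation sketched above.
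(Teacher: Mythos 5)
Your proposal follows essentially the same route as the paper's proof: a smoothness-plus-PL descent recursion for biased SGD with the bias controlled at scale $\sqrt{\delta\epsilon}$ via Lemma~\ref{bounded}, combined with the stopping time $\tau$ marking the first exit from $\text{\bf{Dom}}_{K^{(0)},10/\delta}$, a supermartingale-type bound on $\mathbb{P}(\tau\le T)$ (the paper delegates this to Proposition~1 of \cite{malik2018derivative}), and a final Markov/Chebyshev split of $\mathbb{P}\{\Delta_T\ge\epsilon\}$ into the stopped-accuracy event and the escape event. The structure, the role of the safety factor $\nu=10/\delta$, and the order in which the estimates are assembled all match the paper's argument.
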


\begin{proof}
	For any $t=0,1,\cdots$, define the error $\Delta_t=J(K^{(t)})-J(K^*)$ and the stopping time 
	\[
	\tau:=\min \left\{t | \Delta_{t}>10 \Delta_{0}/\delta\right\}.
	\]
	We first note that \cref{lem:restriction_to_dom} yields $K^{(t)}\in$\textbf{Dom}$_{K^{(0)},\nu}$ for all $t<\tau$ since $\gamma$ and $\beta$ is chosen such that \eqref{eq:constant_epsilon_K} and \eqref{eq:lemma_restriction_to_dom2} hold with $\nu=\frac{10}{\delta}$ and $K=K^{(0)}$. Hence $C_2$ and $C_3$ are both uniform bounded in \textbf{Dom}$_{K^{(0)},\nu}$.
	
	Next, for simplicity, we define $\mathbb{E}^{t}:=\mathbb{E}\left[\cdot | \mathcal{A}_{t}\right]$ as the expectation operator conditioned on the sigma field $\mathcal{A}_{t}$ which contains all the randomness of the first $t$ iterates. 
	Since the  gradient $\nabla J$ is locally Lipschitz, which is shown in \cite{malik2018derivative}, there are a uniform Lipschitz constant $\phi_{0}$ and  a uniform radius $\rho_0$ such that
	\be
	J(K')-J(K)\leq \left\langle\nabla J(K), K'-K\right\rangle+\frac{\phi_0}2\left\|K'-K\right\|_F^2
	\ee
	for any $K$ and $K'$ with $\|K-K'\|_F\leq \rho_0$. We choose $\alpha$ sufficiently small such that $\alpha M_G\leq\rho_0$.   
	
	Let L be sufficiently large such  that $C_2\frac{\Gamma^2}{1-\rho^2}\sqrt{n}\gamma^{L+1}(\frac{\Gamma^2}{1-\rho^2}+\frac{1}{1-\gamma})\leq \sqrt{\delta\epsilon\mu/180}$. Using this and in particular \cref{bounded}, we obtain 
	\be\label{eq:convergence_policy_gradient_inequality}
	\begin{aligned}
		&\mathbb{E}^{t}\left[J\left(K^{(t+1)}\right)-J\left(K^{(t)}\right)\right]\\
		\leq& \mathbb{E}^{t}\left[\left\langle\nabla J\left(K^{(t)}\right), K^{(t+1)}-K^{(t)}\right\rangle+\frac{\phi_{0}}{2}\left\|K^{(t+1)}-K^{(t)}\right\|_{F}^{2}\right]\\
		=&-\alpha\left\langle \nabla J\left(K^{(t)}\right), \nabla J\left(K^{(t)}\right)\right\rangle+\frac{\phi_{0} \alpha^{2}}{2} \mathbb{E}^{t}\left[\left\|\hat G^{(L)}\left(K^{(t)}\right)\right\|_F^{2}\right]\\
		&+\alpha\left\langle \nabla J\left(K^{(t)}\right),\nabla J\left(K^{(t)}\right)-\mathbb{E}^{t}[\hat G^{(L)}(K^{(t)})]\right\rangle\\
		\leq&-\alpha\Big\|\nabla J\left(K^{(t)}\right)\Big\|_F^2+\frac{\phi_{0} \alpha^{2}}{2} \mathbb{E}^{t}\left[\left\|\hat G^{(L)}\left(K^{(t)}\right)\right\|_F^{2}\right]\\
		&+\alpha\Big\|\nabla J\left(K^{(t)}\right)\Big\|_F\Big\|\nabla J\left(K^{(t)}\right)-\mathbb{E}^{t}[\hat G^{(L)}(K^{(t)})]\Big\|_F\\
		\leq&-\alpha\Big\|\nabla J\left(K^{(t)}\right)\Big\|_F^2+\frac{\phi_{0} \alpha^{2}}{2}\left[\mathrm{Var}\left(\hat G^{(L)}(K^{(t)})\right)+\left\|\mathbb E^t[\hat G^{(L)}(K^{(t)})]\right\|_F^2\right]\\
		&+\alpha\Big\|\nabla J\left(K^{(t)}\right)\Big\|_F\sqrt{\delta\epsilon\mu/180}\\
		\leq& -\frac34\alpha\Big\|\nabla J\left(K^{(t)}\right)\Big\|_F^2+\frac{\alpha\delta\epsilon\mu}{180}+\frac{\phi_{0} \alpha^{2}}{2}G_2+\phi_{0} \alpha^{2}\left(\Big\|\nabla J\left(K^{(t)}\right)\Big\|_F^2+\frac{\delta\epsilon\mu}{180}\right),
	\end{aligned}
	\ee
	where $G_2=\mathrm{Var}\left(\hat G^{(L)}(K^{(t)})\right)=\mathbb{E}^{t}\left[\left\|\hat G^{(L)}\left(K^{(t)}\right)\right\|_F^{2}\right]-\left\|\mathbb{E}^{t}[\hat G^{(L)}\left(K^{(t)}\right)\right\|_F^{2}$. We note that in the last estimation of \eqref{eq:convergence_policy_gradient_inequality} the inequality 
	\begin{align*}
	\alpha\Big\|\nabla J\left(K^{(t)}\right)\Big\|_F\sqrt{\delta\epsilon\mu/180}\leq \frac14\alpha\Big\|\nabla J\left(K^{(t)}\right)\Big\|_F^2+\frac{\alpha\delta\epsilon\mu}{180}
	\end{align*}
	is used. We assume that $\alpha\leq \frac1{2\phi_0}$. By the PL condition \eqref{PLcond}, we have
	\be
	\begin{aligned}
		\mathbb{E}^{t}\left[\Delta_{t+1}-\Delta_t\right]\leq&-\frac14\alpha\Big\|\nabla J\left(K^{(t)}\right)\Big\|_F^2+\frac{\phi_{0} \alpha^{2}}{2}G_2+\frac{\alpha\delta\epsilon\mu}{120}\\
		\leq &-\frac14\alpha\mu\Delta_t+\frac{\phi_{0} \alpha^{2}}{2}G_2+\frac{\alpha\delta\epsilon\mu}{120}.
	\end{aligned}
	\ee
	Applying successively this inequality, we obtain a similar result as in \cite{malik2018derivative}:
	\[
	\begin{aligned}
	\mathbb{E}\left[\Delta_{t+1} 1_{\tau>t+1}\right] \leq&\left(1-\frac{\alpha \mu}{4}\right)^{t+1} \Delta_{0}+(\frac{\phi_{0} \alpha^{2}}{2}G_2+\frac{\alpha\mu\epsilon\delta}{120}) \sum_{i=0}^{t}\left(1-\frac{\alpha \mu}{4}\right)^{i}\\
	\leq&\left(1-\frac{\alpha \mu}{4}\right)^{t+1} \Delta_{0}+\frac{2}{\mu}\alpha\phi_0G_2+\frac{\epsilon\delta}{30},\\
	\end{aligned}
	\]
	where we have used that $\mathbb{E}^{t}\left[\Delta_{t}\right]=\Delta_{t}$. By \eqref{lemmaeq6},  we observe that taking $\alpha\leq\frac{\mu\epsilon\delta}{240\phi_0  C_3\frac{L^3}{(1-\gamma^2)}}$ implies $\frac{2}{\mu}\alpha\phi_0G_2\leq \frac{\epsilon\delta}{120}$. We note that this condition on $\alpha$ as well as $\alpha M_G\leq\rho_0$ and $\alpha\leq \frac1{2\phi_0}$ are satisfied for $\alpha\leq O\left(\frac{\delta\epsilon(1-\gamma^2)\sigma^2}{L^3d}\right)$.
	Setting $t=T-1$, we observe that for  
	\[
	T\geq C\cdot\frac{1}{\alpha}\log\left[\frac{120[ J(K^{(0)})-J(K^*)]}{\delta\epsilon}\right]
	\]
	with a sufficiently large constant $C$ the inequality 
	$\mathbb{E}^{t}\left[\Delta_{t+1} 1_{\tau>t+1}\right]\leq\frac{\epsilon\delta}{5}$ holds.
	
	By using the same techniques as in proof of the Proposition 1 in \cite{malik2018derivative}, we observe that $\mathbb{P}\left\{1_{\tau \leq T}\right\}\leq 4\delta/5$. By Chebyshev's inequality, we have
	\[
	\begin{aligned}
	\mathbb{P}\left\{\Delta_{T} \geq \epsilon\right\} \leq& \mathbb{P}\left\{\Delta_{T} 1_{\tau>T} \geq \epsilon\right\}+\mathbb{P}\left\{1_{\tau \leq T}\right\}\\
	\leq& \frac{1}{\epsilon} \mathbb{E}\left[\Delta_{T} 1_{\tau>T}\right]+\mathbb{P}\left\{1_{\tau \leq T}\right\}\\
	\leq& \frac{\delta}{5}+\frac{4\delta}{5}\\
	=&\delta.
	\end{aligned}
	\]
	This completes the proof.
\end{proof}

\section{The Actor-Critic Algorithm}\label{sect:AC}
In the policy gradient algorithm, we update the policy through pure sampling updates. Hence, the policy gradient has high variance, which slows down the speed of convergence. 
A popular method to reduce the variance  is the Actor-Critic algorithm, which replaces the Monte Carlo method by the bootstrapping method. The policy gradient in the Actor-Critic algorithm has the following form:
\be\label{sgb2}
\hat G_{AC}^{(L)}(K)=\frac{1}{\sigma^2}\sum_{t=0}^L\left[(u_t-Kx_t)x_t^\top \gamma^t\tilde\delta_t\right].
\ee 
where $\tilde\delta_t:=c_t+\gamma\widetilde V(x_{t+1})-\widetilde V(x_{t})$. 
We investigate the bias and the variance of the estimators.

\begin{lemma}\label{lemmaacb}
	Suppose that the policy $K$ is stable and $\widetilde V$ is an $\epsilon_0-$approximation of the state value function. Then for the estimation $\hat G_{AC}^{(L)}(K)$ shown in \eqref{sgb2} the following inequalities hold 
	\begin{eqnarray}
	\label{bound61}\left\Vert \nabla J(K)-\mathbb{E}[\hat G_{AC}^{(L)}(K)]\right\Vert_2&\leq& C_2\gamma^{L+1}\frac{\Gamma_K^2}{(1-\rho^2)(1-\gamma)}+C_4\Gamma_K J(K)\epsilon_0,\\
	\label{eq:ACSMB}\mathbb{E}\left\Vert \hat G_{AC}^{(L)}(K)\right\Vert^2_F&\leq& \left[ \frac{C_5}{\sigma^2}+O(\Tr[D_K]\sigma^2)\right]\frac{L}{1-\gamma^2},
	\end{eqnarray}
	where $C_2$ is given as in \cref{bounded} and $C_4$, $C_5$ depend on $\|A\|_2$, $\|B\|_2$, $\|S\|_2$, $\|R\|_2$, $\|K\|^2_2$, $\Gamma_K$, $\frac1{1-\rho^2}$, $\widetilde\omega$ and $(1-\gamma)J(K)$ with $\rho\in (\rho(A+BK),1)$. 
\end{lemma}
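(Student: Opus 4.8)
The plan is to mirror the proof of \cref{bounded}, exploiting that the only structural difference between $\hat G^{(L)}(K)$ and $\hat G_{AC}^{(L)}(K)$ is that the Monte Carlo tail $\sum_{k=t}^L\gamma^k c_k$ is replaced by the single bootstrapped TD error $\gamma^t\tilde\delta_t$. The conceptual heart of the argument is to show that, if the \emph{exact} value function $V_K$ were used as the critic, the per-step conditional expectation of the actor-critic increment coincides with the per-step policy-gradient term already computed in \eqref{boundeqs1}. Concretely, writing $u_t-Kx_t=\eta_t$ and conditioning on $\mathcal F_t$, I would expand $\tilde\delta_t=c_t+\gamma V_K(x_{t+1})-V_K(x_t)$, use $\mathbb E[\eta_t\mid\mathcal F_t]=0$ to annihilate the $V_K(x_t)$ term, invoke the identity \eqref{eq:symmetrie} to kill the cubic-in-$\eta_t$ contribution from the quadratic form $x_{t+1}^\top P_{K,\gamma}x_{t+1}$, and then use $\mathbb E[\eta_t\eta_t^\top\mid\mathcal F_t]=\sigma^2 I_d$ together with $x_{t+1}=(A+BK)x_t+\widetilde\omega_t$ to obtain $\mathbb E[\eta_t x_t^\top\gamma^t\tilde\delta_t\mid\mathcal F_t]=2\sigma^2\gamma^t[\gamma B^\top P_{K,\gamma}(A+BK)+RK]x_tx_t^\top$, exactly as in \eqref{boundeqs1}.

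For the bias bound \eqref{bound61} I would split the error into a truncation part and an approximation part. Summing the identity above over $t=0,\dots,L$ and dividing by $\sigma^2$ shows that, with the exact critic, the expected estimator equals $\nabla J(K)$ minus the tail $2\sum_{t>L}\gamma^t[\gamma B^\top P_{K,\gamma}(A+BK)+RK]\mathbb E[x_tx_t^\top]$, which is literally the first term estimated in \eqref{lemmaeq3}; hence it is bounded by $C_2\gamma^{L+1}\Gamma_K^2/[(1-\rho^2)(1-\gamma)]$ with the same $C_2$ as in \cref{bounded}. For the approximation part, replacing $V_K$ by its $\epsilon_0$-accurate surrogate $\tilde V$ introduces a parameter error $\Theta_\Delta$ in the quadratic part of $\tilde V-V_K$, with $\|\Theta_\Delta\|_F\le\epsilon_0$; the same conditional computation yields the extra per-step term $2\gamma^{t+1}B^\top\Theta_\Delta(A+BK)\mathbb E[x_tx_t^\top]$. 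Summing over $t$ and using $\|A+BK\|_2\le\Gamma_K$ (from \eqref{eq:stability}) together with $\sum_{t}\gamma^t\|\mathbb E[x_tx_t^\top]\|_2\le\Tr[\Sigma_{K,\gamma}]\le J(K)/\sigma_{\min}(S)$ from the proof of \eqref{eq:J_est_2} produces the second term $C_4\Gamma_K J(K)\epsilon_0$ with $C_4=2\|B\|_2/\sigma_{\min}(S)$.

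For the second-moment bound \eqref{eq:ACSMB} I would proceed as in \eqref{boundkeypg}: by Cauchy--Schwarz, $\mathbb E\|\hat G_{AC}^{(L)}(K)\|_F^2\le \frac{L+1}{\sigma^4}\sum_{t=0}^L\gamma^{2t}\mathbb E[\tilde\delta_t^2\|\eta_t\|_2^2\|x_t\|_2^2]$, and then estimate each summand. Since $\tilde\delta_t$ is a \emph{single} TD error rather than a cumulative sum, the inner expectation is an eighth-order polynomial in $x_t,\eta_t,\omega_t$ with finite Gaussian moments, and no extra factor of $L^2$ appears; this is precisely why the actor-critic estimator attains the improved $O(L/(1-\gamma^2))$ bound in place of the $O(L^3/(1-\gamma^2))$ bound of \cref{bounded}. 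Tracking the $\sigma$-scaling, the monomials in which $\tilde\delta_t$ contributes its $x$-quadratic part pair with $\|\eta_t\|_2^2$ to give an $\sigma^2\mathbb E_{\mu_K}\|x\|^6$-type contribution, i.e. $C_5/\sigma^2$ after the $\sigma^{-4}$ prefactor, while the monomials carrying the $\eta_t^\top R\eta_t$ part of $c_t$ give the $O(\Tr[D_K]\sigma^2)$ contribution; finally $\sum_t\gamma^{2t}\le 1/(1-\gamma^2)$ supplies the stated factor.

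The main obstacle I anticipate is the bookkeeping in the second-moment estimate: one must carefully separate the $\sigma$-dependence of the many monomials of $\tilde\delta_t^2\|\eta_t\|_2^2\|x_t\|_2^2$ to land on the precise split $C_5/\sigma^2+O(\Tr[D_K]\sigma^2)$, and must verify, via \eqref{eq:stability} and the uniform moment bounds $\mathbb E_{\mu_K}\|x\|^{2k}=O(\Gamma_K^{2k}/(1-\rho^{2k}))$ noted after \cref{thm:state_value_estimation_stochastic}, that the resulting constants $C_4,C_5$ depend only on the listed quantities. By contrast, the key identity of the first step and the truncation tail estimate are essentially reruns of \cref{bounded}, so the genuinely new work is confined to the $\epsilon_0$-approximation term and the variance computation.
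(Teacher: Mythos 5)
Your proposal is correct and follows essentially the same route as the paper: the bias is split into the truncation tail over $t>L$ (reusing the estimate \eqref{lemmaeq3} from \cref{bounded}) plus the critic-approximation term, which reduces via the conditional computation to $2B^\top(\Theta_K^*-\Theta)(A+BK)\mathbb E[\sum_t\gamma^{t+1}x_tx_t^\top]$ and is bounded through $\|\Sigma_{K,\gamma}\|_2\le J(K)/\sigma_{\min}(S)$, while the second moment is handled exactly as in \eqref{boundkeypg} with the single TD error removing the extra $L^2$ factor. The only cosmetic difference is that you first verify the per-step identity for the exact critic and then perturb, whereas the paper writes the decomposition directly; the resulting bounds and constants agree.
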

\begin{proof}
	Analogous to the proof of \cref{bounded}, can split the bias of $\hat G_{AC}^{(L)}$ into two parts:
	\be\label{boundeq61}
	\begin{aligned}
		\nabla J(K)-\mathbb{E}[\hat G_{AC}^{(L)}(K)]=&\frac{1}{\sigma^2}\mathbb{E}\left[\sum_{t=L+1}^\infty(u_t-Kx_t)x_t^\top \sum_{k=t}^\infty\gamma^kc_k\right]\\
		&+\frac{1}{\sigma^2}\mathbb{E}\left[\sum_{t=0}^L\gamma^{t+1}(u_t-Kx_t)x_t^\top(V_K(x_{t+1})-\widetilde V(x_{t+1}))\right].
	\end{aligned}
	\ee 
	
	We have discussed the first term in the proof of \cref{bounded} and its upper bound is $ C_2\frac{\Gamma_K^2}{(1-\rho^2)(1-\gamma)}\gamma^{L+1}$.
	Let $V_K(x)=\phi(x)^\top\theta^*=x^\top \Theta_K^* x+\theta^*_0$ and  $\widetilde V(x)=\phi(x)^\top \theta=x^\top \Theta x+\theta_0$.
	We observe that the second term of \eqref{boundeq61} has the following equivalent form:
	\be\label{boundeq62}
	\begin{aligned}
		&\frac{1}{\sigma^2}\mathbb{E}\left[\sum_{t=0}^L\gamma^{t+1}(u_t-Kx_t)x_t^\top(V_K(x_{t+1})-\widetilde V(x_{t+1}))\right]\\
		=&\frac{1}{\sigma^2}\mathbb{E}\left[\sum_{t=0}^L\gamma^{t+1}\eta_t(\Tr[(\Theta^*_{K}-\Theta)x_{t+1}x_{t+1}^\top])x_t^\top\right]\\
		=&\frac{1}{\sigma^2}\mathbb{E}\left[\sum_{t=0}^L\gamma^{t+1}\eta_t[\eta_t^\top B^\top(\Theta^*_{K}-\Theta)(A+BK)x_t]x_t^\top\right]\\
		=&2B^\top(\Theta^*_{K}-\Theta)(A+BK)\mathbb{E}\left[\sum_{t=0}^L\gamma^{t+1}x_tx_t^\top\right].
	\end{aligned}
	\ee 
	Since $\left\|\mathbb{E}\left[\sum_{t=0}^L\gamma^{t+1}x_tx_t^\top\right]\right\|_2\leq \|\Sigma_{K,\gamma}\|_2\leq \frac{J(K)}{\sigma_{\mathrm{min}}(S)}$ and $\widetilde V$ is an $\epsilon_0$-approximation of $V_K$, the right hand of \eqref{boundeq62} is bounded by $\frac{\|B\|_2\Gamma_K}{\sigma_{\mathrm{min}}(S)} J(K)\epsilon_0$, which yields \eqref{bound61}.

	%
	Since $\|\theta_1\|_2\leq \|\theta_1^\star\|_2+\epsilon_0\leq \frac{(1-\gamma)J(K)}{\gamma\sigma_{\mathrm{min}}(D_{\widetilde \omega})}+\epsilon_0$ by \eqref{eq:J_est_1} and $|\theta_0|\leq |\theta_0^*|+\epsilon_0=\frac{\gamma}{1-\gamma}\Tr[P_{K,\gamma}D_{\widetilde\omega}]+\frac{\sigma^2\Tr[ R]}{1-\gamma}+\epsilon_0=J(K)+\epsilon_0$, the following inequality holds:
	\be
	\begin{aligned}
		\tilde \delta_t^2&\leq 3\left[c_t^2+(1-\gamma)^2|\theta_0|^2+\|x_t^\top x_t-\gamma x_{t+1}^\top x_{t+1}\|^2_F\|\theta_1\|_2^2\right]\\
        &\leq 3c_t^2+O((1-\gamma)^2J^2(K))(1+\|x_t\|_2^4+ \|x_{t+1}\|^4_2).
	\end{aligned}
	\ee
	Hence, we obtain the inequality \eqref{eq:ACSMB}:
	\be
	\begin{aligned}
		\mathbb{E}\left\Vert \hat G_{AC}^{(L)}(K)\right\Vert^2_F
		&\leq \frac{L}{\sigma^4}\sum_{t=1}^L\gamma^{2t}\mathbb E\left(\|x_{t}\|_2^2\|\eta_{t}\|_2^2|\tilde\delta_{t}|^2\right)\\
		&\leq\frac{L}{\sigma^4}\sum_{t=1}^L\gamma^{2t}\left[C_5\sigma^2+O(\Tr[D_K])\sigma^6\right]\\
		&\leq\left[ \frac{C_5}{\sigma^2}+O(\Tr[D_K]\sigma^2)\right]\frac{L}{1-\gamma^2},
	\end{aligned}
	\ee
	where the third inequality holds by the same skill in \eqref{sigmahb} and 
	\[C_5=O\left([(1-\gamma)^2J^2(K)+\|K\|^4_2]\mathbb E_{\mu_K}\|x\|^6\right).\]
	In the estimations above similar techniques as in \eqref{boundkeypg} are used.
	
\end{proof}

Now we can obtain a convergence result for the AC algorithm which is similar to the one in
\cref{PGTH}. In order to prove this result, we need the following assumption.
\begin{assumption}\label{assumption:GLACK_bounded}
There is a positive number $M^{AC}_G$ such that $\big\|\hat G_{AC}^{(L)}(K)\|_F\leq M^{AC}_G$ for any stable $K$.
\end{assumption}
\begin{theorem}\label{thm:policy_ac_convergence}
Suppose that \cref{assumption:GLACK_bounded} is satisfied and let $K^{(0)}$ be a stable policy. Moreover, suppose that $\gamma$ is sufficiently close to $1$ and $0<\beta<1$ small enough such that \eqref{eq:lemma_restriction_to_dom2} holds with $\nu=\frac{10}{\delta}$.
For any error tolerance $\epsilon$ and confidence $\delta$  satisfying \eqref{bounderr_con}, suppose that the sample size $L$ is large enough and the error of the approximation value function $\epsilon_0$ is small enough such that \[\gamma^{L+1}\frac{1}{1-\gamma}\leq O(\sqrt{\delta\epsilon/n})\quad \text{and} \quad J(K^{(0)})\epsilon_0\leq O(\sqrt{\delta\epsilon/n}).\]
The step size $\alpha$ is chosen such that $\alpha\leq O\left(\frac{\delta\epsilon(1-\gamma^2)}{Ld}\right)$ holds. After $T$ iterations with $T=O( \frac{1}{\alpha}\log[\frac{ J(K^{(0)})-J(K^*)}{\delta\epsilon}])$ iterations, the iterate $K^{(T)}$ satisfies
\[
J(K^{(T)})-J( K^*)\leq \epsilon
\]
with probability greater than $1-\delta$.
\end{theorem}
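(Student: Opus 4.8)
The plan is to follow, essentially verbatim, the martingale-with-stopping-time argument used to prove \cref{PGTH}, substituting the actor-critic estimates of \cref{lemmaacb} for the Monte-Carlo estimates of \cref{bounded}. First I would set $\Delta_t=J(K^{(t)})-J(K^*)$ and introduce the stopping time $\tau:=\min\{t\mid \Delta_t>10\Delta_0/\delta\}$. As in \cref{PGTH}, since $\gamma$ and $\beta$ are chosen so that \eqref{eq:lemma_restriction_to_dom2} holds with $\nu=10/\delta$, \cref{lem:restriction_to_dom} guarantees that every iterate $K^{(t)}$ with $t<\tau$ lies in the compact set \textbf{Dom}$_{K^{(0)},\nu}$. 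On this set the constants $\Gamma_K$, $\rho$, $C_2$, $C_4$, $C_5$ and the value $J(K)$ entering \cref{lemmaacb} are all uniformly bounded, which is what makes the per-step bias and variance estimates usable along the whole trajectory up to $\tau$.

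The central computation is the one-step descent inequality, obtained exactly as in \eqref{eq:convergence_policy_gradient_inequality}: using the local Lipschitz continuity of $\nabla J$ from \cite{malik2018derivative}, the update $K^{(t+1)}=K^{(t)}-\alpha\hat G_{AC}^{(L)}(K^{(t)})$ and the conditional expectation $\mathbb{E}^t$, I would expand $\mathbb{E}^t[J(K^{(t+1)})-J(K^{(t)})]$ into a negative gradient-squared term, a quadratic variance term, and a cross term involving the bias $\|\nabla J(K^{(t)})-\mathbb{E}^t[\hat G_{AC}^{(L)}(K^{(t)})]\|_F$. The one genuine difference from \cref{PGTH} is that this bias, bounded by \eqref{bound61} (after the $\|\cdot\|_2\le$ dependent $\sqrt{n}\,\|\cdot\|_F$ conversion), now splits into a truncation part of order $\gamma^{L+1}/(1-\gamma)$ and an approximation part of order $J(K^{(t)})\epsilon_0$. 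The two hypotheses $\gamma^{L+1}/(1-\gamma)\le O(\sqrt{\delta\epsilon/n})$ and $J(K^{(0)})\epsilon_0\le O(\sqrt{\delta\epsilon/n})$ are precisely what force the total bias below $\sqrt{\delta\epsilon\mu/180}$; Young's inequality then absorbs the cross term into $\tfrac14\alpha\|\nabla J(K^{(t)})\|_F^2$ plus an $O(\alpha\delta\epsilon\mu)$ remainder, and the PL condition \eqref{PLcond} converts the gradient term into $-\tfrac14\alpha\mu\Delta_t$.

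The variance bound \eqref{eq:ACSMB} is where the actor-critic method pays off: because $\mathbb{E}\|\hat G_{AC}^{(L)}\|_F^2=O(L/(1-\gamma^2))$ rather than the $O(L^3/(1-\gamma^2))$ of the Monte-Carlo gradient, the requirement $\tfrac{2}{\mu}\alpha\phi_0 G_2\le \epsilon\delta/120$ is already satisfied for $\alpha\le O(\delta\epsilon(1-\gamma^2)/(Ld))$, the less restrictive step size stated in the theorem. This gives the recursion $\mathbb{E}^t[\Delta_{t+1}-\Delta_t]\le -\tfrac14\alpha\mu\Delta_t+\tfrac{\phi_0\alpha^2}{2}G_2+\tfrac{\alpha\delta\epsilon\mu}{120}$; iterating it over the truncated process $\Delta_t 1_{\tau>t}$ and choosing $T=O(\tfrac1\alpha\log[(J(K^{(0)})-J(K^*))/(\delta\epsilon)])$ yields $\mathbb{E}[\Delta_T 1_{\tau>T}]\le \epsilon\delta/5$.

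Finally, to upgrade this to the high-probability statement I would reuse the stopping-time estimate from Proposition 1 of \cite{malik2018derivative} to obtain $\mathbb{P}\{\tau\le T\}\le 4\delta/5$, and then combine it with the Markov-type inequality exactly as at the end of \cref{PGTH} to conclude $\mathbb{P}\{\Delta_T\ge\epsilon\}\le \delta/5+4\delta/5=\delta$. I expect the main obstacle to be verifying that the approximation-error contribution $C_4\Gamma_K J(K)\epsilon_0$ to the bias stays uniformly small for \emph{every} policy visited before $\tau$, since $\epsilon_0$ is only a per-step accuracy of the critic; this is exactly where the compactness of \textbf{Dom}$_{K^{(0)},\nu}$ and the resulting uniform control of $\Gamma_K$ and $J(K)$ are indispensable, and it is also why the hypothesis is stated with $J(K^{(0)})\epsilon_0$ rather than with the running $J(K^{(t)})\epsilon_0$.
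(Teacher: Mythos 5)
Your proposal is correct and follows exactly the route the paper intends: the paper's own proof of this theorem is simply the remark that it is ``similar to the proof of Theorem~\ref{PGTH}'', and your write-up fills in precisely the two substitutions that make that work (the extra $C_4\Gamma_K J(K)\epsilon_0$ bias term absorbed by the hypothesis on $J(K^{(0)})\epsilon_0$, and the improved $O(L/(1-\gamma^2))$ variance bound from Lemma~\ref{lemmaacb} that justifies the larger step size). In fact your version is more explicit than the paper's, and your closing observation about why uniform control of $J(K)$ on $\textbf{Dom}_{K^{(0)},\nu}$ is needed for the critic-error term is exactly the right point to flag.
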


\begin{proof}
The proof is similar to the proof of \cref{PGTH}. 
\end{proof}

Finally, we analyze the sample complexity of the policy gradient method and the
AC algorithm. We assume that the discount factor $\gamma$ with $0<\gamma<1$ is
close to $1$ such that $\log(\gamma)\approx\gamma-1$. By the definition of
$J(K)$ and in particular by its representation in
\eqref{eq:cost_function_simplified}, we obtain
$J(K^{(0)})=O(\frac{1}{1-\gamma})$. For the AC algorithm, we have to require
that $J(K^{(0)})\epsilon_0=O(\sqrt{\delta\epsilon/n})$. Then the TD-learning
algorithm needs $O\left(\frac{1}{\delta\epsilon(1-\gamma)^4}\right)$ steps by
\cref{thm:semi-gradient_update}. However, the sample size $L$ in the AC algorithm is only $O\left(\frac{\log\frac{n}{(1-\gamma)\delta\epsilon}}{\log(\gamma)}\right)\approx O((1-\gamma)^{-2})$. We can sample $\frac{1}{(1-\gamma)^2}$ trajectories parallelly. Then the variance of the gradient $\hat G^{L}(K)$ becomes $(1-\gamma)L=O(\frac{1}{1-\gamma})$. Using similar arguments as in the proof of \cref{PGTH}, one can prove that the iteration time $T$ of the AC algorithm is equal to $O(\frac{1}{\delta\epsilon(1-\gamma)})$. From the statements shown above, we conclude the complexities given in \cref{tab:complexity}.

\section{Conclusion}\label{sect:conclusion}
Reinforcement learning has achieved success in many fields but lacks theoretical understanding in the continuous case. In this paper, we apply well known algorithms in RL to a basic model LQR. First, we show the convergence of the policy iteration with TD learning, which is hard to prove in general cases. Then we obtain the linear convergence of the policy gradient method and AC algorithm. Finally, we compare the sample complexity of those algorithms. 

The results of this paper are proved for the LQR setting, which allows us to
restrict ourselves to linear policies. For extensions to more general problems,
this restriction to a linear framework is not possible anymore. Consequently,
the policy function may depend nonlinearly on its parameters which makes in
particular the convergence analysis of optimization methods such as the policy gradient method much more involved. A further difficulty is that the PL condition, which guarantees that stationary policies are globally optimal, may not hold anymore. However, since the LQR can be used as approximation for more general nonlinear problems, the techniques developed in this paper can serve as important tool for the treatment of more general problems.

\section*{Acknowledgments.}
The research is supported in part by the NSFC grant 11831002 and  Beijing Academy  of Artificial Intelligence.

\bibliographystyle{siamplain}
\bibliography{mybib}
\end{document}